\newtheorem{theorem}{Theorem}[section]
\newtheorem{lemma}[theorem]{Lemma}
\newtheorem{proposition}{Proposition}
\theoremstyle{definition}
\newtheorem{definition}[theorem]{Definition}
\title[Multi-agent Systems for Quadcopters] 
      {Multi-agent Systems for Quadcopters}
\author[Carney, Chyba, Gray, Shanbrom and Wilkens]{}
\subjclass{Primary: 70E60; 93A14; Secondary:  70E55.}
 \keywords{Quadcopter, rigid body dynamics, graph Laplacian, multi-agent coordination, network consensus, weighted network, decentralized control}
 \email{rscarney@hawaii.edu}
 \email{chyba@hawaii.edu}
 \email{cgray8@hawaii.edu}
 \email{grw@math.hawaii.edu}
 \email{corey.shanbrom@csus.edu}
\thanks{$^*$ Corresponding author: M. Chyba}
\begin{document}
\maketitle

\centerline{\scshape Richard Carney, Monique Chyba$^*$, Chris Gray, and George Wilkens }
\medskip
{\footnotesize
 \centerline{Department of Mathematics, University of Hawaii at Manoa}
   \centerline{2565 McCarthy Mall}
   \centerline{Honolulu, Hawaii 96822, USA}
} 

\medskip

\centerline{\scshape Corey Shanbrom}
\medskip
{\footnotesize
 \centerline{Department of Mathematics and Statistics}
   \centerline{California State University, Sacramento}
   \centerline{6000 J St., Sacramento, CA 95819, USA}
}

\bigskip


\begin{abstract}
Unmanned Aerial Vehicles (UAVs) have been increasingly used in the context of remote sensing missions such as target search and tracking, mapping, or surveillance monitoring. In the first part of our paper we consider agent dynamics, network topologies, and collective behaviors. The objective is to enable multiple UAVs to collaborate toward a common goal, as one would find in a remote sensing setting. An agreement protocol is carried out by the multi-agents using local information, and without external user input. The second part of the paper focuses on the equations of motion for a specific type of UAV, the quadcopter, and expresses them as an affine nonlinear control system. Finally, we illustrate our work with a simulation of an  agreement  protocol  for  dynamically  sound  quadcopters augmenting the particle graph theoretic approach with orientation and a proper dynamics for quadcopters. 
\end{abstract}


\section{Introduction}
Unmanned Aerial Vehicles (UAVs) present unique opportunities to investigate unsafe or otherwise unreachable locations. UAVs are already being used in numerous agricultural applications (\cite{Kim,Costa}), search and rescue (\cite{Doherty,Waharte}), and conservation biology (\cite{Wich}).  While there are still many open issues in the use of UAVs, they also have much to offer when it comes to disaster preparation and response (\cite{Erdelj}).

In Spring 2018, motivated by the 2018 Kilauea volcano eruption, we conducted work in collaboration with NASA Jet Propulsion Laboratory (JPL) to develop a real-time data relay system for UAVs.
A relay system augmented with an interactive visual aid for data viewing was developed and implemented to exploit the live stream, including image superposition between sequential flights to rapidly provide information about changes to the situation under study. In September 2018, we demonstrated the concept at the University of Hawai’i at M\=anoa with live streaming of images from Bellows Air Force Station and video from Fissure 8 on Hawai’i Island. We then developed on-board capability for image analysis for real time situation assessment (\cite{IGARSS}). In parallel, we also started to study motion planning for a swarm of UAVs (\cite{IGARSS2}), which is the problem we focus on in this paper. See \cite{Videodrones} for a video demonstrating some of the work.  

Implementing a coordinated group of UAVs, rather than a single vehicle, is naturally more efficient for many mission types. These include tracking, surveillance, and mapping  (\cite{AerialSwarm}). In this regard, there are numerous reasons, driven by the variety of applications, why collective control of a group of UAVs is important. Common objectives include synchronization, formation, flocking, consensus, and rendezvous. Rendezvous is the primary focus here (\cite{CollectiveControl}). Moreover, autonomy of individual UAVs provides a flexibility and adaptability that can be beneficial in many kinds of applications. Optimization techniques, as an approach to collective control, have the general drawback of scaling poorly with the number of vehicles. We shall focus on a common alternative: the so-called graph theoretic methods. The relative simplicity of these methods makes them an attractive choice in terms of both scalability and the desire for real-time control (\cite{Meshabi,PathPlanning}). Moreover, they apply quite naturally to situations where there is an existing network of communications between the UAVs, although the existence of such is also an additional assumption being made. The benefits include control that is decentralized in that it occurs over the network without any advanced planning, and certain objectives, such as rendezvous, can be carried out with only relative information being shared across the network.

In this paper we focus on path-planning of the most common type of UAV, quadcopters, and we focus on the dynamics of this specific type of UAV in Sections \ref{section-dynamics} and \ref{section-quadtrajectories}.  Section \ref{section multi-agents}, however, applies to any multi-agent UAV path configuration since we treat them as particle points. We also occasionally refer to any UAV by the common, though imprecise, more general term ``drone". These small versatile vehicles have proved to be a major asset in the increasing demand for complex autonomous robotics missions, particularly those involving survey and object detection. Our end-goal is developing, for a swarm of quadcopters, the ability to autonomously make decisions based on shared information. 

The outline of the paper is as follows. In Section \ref{section multi-agents} we discuss the multi-agent autonomous agreement. Agents are considered as particles and the methodology is based on graph theory. Section \ref{section-dynamics} introduces the dynamics of quadcopters and their navigation. In Section \ref{section-quadtrajectories} we approximate our particle multi-agent trajectories by implementable ones satisfying the quadcopter's dynamic equations of motion. 

It is a genuine pleasure to have our article appear in this special issue that honors Professor Bloch. The depth and breadth of his prodigious body of work inspires and enriches each of us. On those occasions when the authors feel like explorers venturing into uncharted territory, we often find a flag marking Tony's prior visitation. Indeed, Tony's contributions to the subject of geometric control of mechanical systems are keenly felt in this paper. His recent work on multi-agent formations with obstacle and inter-agent avoidance in \cite{VarCollision} aims toward a similar goal as ours. Although the mathematical approach taken differs from the one here, some of his work in \cite{QuantLind} does apply a graph theoretic methodology.  
In addition, the rigid body dynamics that we highlight in the second half of the paper is core to much of his research (\cite{Bloch book,Bloch1,GeomStructPreser,PlanarMotGrav}). With sincere appreciation and gratitude, the authors offer Professor Bloch their very best wishes.

\newpage
\section{Multi-Agent Autonomous Agreement}
\label{section multi-agents}
Suppose a mission with multiple agents requires that at some point the vehicles come together at a single location, for instance at the end of a mission.  With batteries possibly running low, a main objective is to conserve power and rely on short-range communication. Autonomous agreement is one of the fundamental problems in multi-agent coordination,
where a collection of agents are to agree on a joint state value.

\subsection{Agreement Protocol}
Our multi-agent systems will be viewed as networks, as described in this section. Our primary goal is to highlight the existence of an intricate relationship between the convergence properties of the so-called {agreement protocol} on one hand, and the structure of the underlying agent network connections on the other.
\begin{definition}
We define a {\it network} of agents to be a finite collection of  agents
possessing some inter-connectivity via relative information exchange links. The network is modeled by a graph, denoted
$G(V,E)$, where $V$ is the set of vertices and $E$ is the set of edges.
The agents correspond to the vertices of $G(V,E)$ and the relative information
exchange links correspond to the edges. 
\end{definition}
In the sequel we denote by $V=\{v_1,v_2,\dots,v_n\}$ 
the vertices corresponding to the $n$ agents, labeled $1,2,\dots,n$. The set of edges is a subset
$E\subset V\times V$; each edge has the form $(v_i,v_j)=v_iv_j$.
We assume $v_iv_j\in E$ implies $v_jv_i\in E$ and that $v_iv_i\notin E$.

We shall enhance this initial model to accommodate time-varying weighted networks, motivated by the desire to account for the strength of communications between agents, or possibly to take into account how the strength of communications evolves in the rendezvous problem as distances between agents change.  Interestingly, suitable assumptions on the weights imply the agreement value is an invariant, and only the trajectories leading to it differ.

The agreement protocol assumes the rate of change of each agent's state is governed by a sum of relative
states with respect to a subset of neighboring agents. Assume the scalar function $q^i(t)$ denotes the state of agent $i$ for $i=1,\dots,n$, and define $q(t)=(q^1(t),\cdots,q^n(t))^T$. It follows that   
\begin{equation}
\label{eqqdot1}
\dot{q}^i(t)=-\sum_{j\in N(i)}\big(q^i(t)-q^j(t)\big), \qquad 1\leq i\leq n,
\end{equation}
where $N(i)$ is the neighborhood of a vertex $v_i$, i.e., the collection of adjacent vertices $N(i)=\{v_j\in V \mid v_iv_j\in E\}$. If we adopt the notion that adjacency is symmetric, i.e., $v_iv_j\in E\iff v_jv_i\in E$, then we
say the network is {\it undirected} and we can
encapsulate the above system in a single matrix equation:
\begin{equation}
\label{eq-agreementdynamics}
\dot{q}(t)=-L(G)q(t),
\end{equation}
where $L(G)\in\mathbb{R}^{n\times n}$ is
called the {\it graph Laplacian}.
The graph Laplacian is uniquely determined by the structure of $G(V,E)$.
\begin{definition}
We refer to Equation (\ref{eq-agreementdynamics}) as the \emph{agreement dynamics}, the \emph{agreement protocol}, or the \emph{consensus protocol}.
\end{definition}
\subsection{Consensus Joint Value}
The graph Laplacian has desirable properties when
$G(V,E)$ is a connected, undirected network. In the rest of this section we will always assume $G(V,E)$ is a connected, undirected graph unless specified otherwise.
\begin{proposition}
Assume $G(V,E)$ is a connected, undirected graph.
The graph Laplacian $L(G)$ is a symmetric positive
semidefinite matrix with eigenvalues $0=\lambda_1<\lambda_2\leq\dots\leq\lambda_n$. Moreover, the $n$-dimensional vector of all ones, $\vec 1$,  is an eigenvector corresponding to the zero eigenvalue $\lambda_1$, i.e.,
$L(G)\vec 1=\vec 0$ .
\end{proposition}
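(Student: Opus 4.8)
The plan is to establish the three claims—symmetry and positive semidefiniteness, the spectrum structure, and the null eigenvector—in a logical order, beginning with an explicit description of the entries of $L(G)$. First I would recall (or verify from the dynamics (\ref{eqqdot1})) that the graph Laplacian admits the representation $L(G) = D - A$, where $A$ is the adjacency matrix with $A_{ij} = 1$ when $v_iv_j \in E$ and $0$ otherwise, and $D = \mathrm{diag}(d_1, \dots, d_n)$ collects the vertex degrees $d_i = |N(i)|$. Equivalently, $L(G)_{ii} = d_i$ and $L(G)_{ij} = -1$ when $v_iv_j \in E$ for $i \neq j$. Symmetry is then immediate from the assumption that the graph is undirected, since both $A$ and $D$ are symmetric.

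Next I would prove positive semidefiniteness and, simultaneously, extract the all-ones null vector by exhibiting the quadratic form. The key identity is that for any $x = (x^1, \dots, x^n)^T \in \mathbb{R}^n$,
\begin{equation}
\label{eq-quadform}
x^T L(G) x = \sum_{v_iv_j \in E,\ i<j} (x^i - x^j)^2,
\end{equation}
which is a sum over edges and is manifestly nonnegative; this gives $L(G) \succeq 0$ at once. Being symmetric and positive semidefinite, $L(G)$ has real, nonnegative eigenvalues, justifying the ordering $0 \le \lambda_1 \le \lambda_2 \le \dots \le \lambda_n$. Setting $x = \vec 1$ makes every summand $(1-1)^2 = 0$, and more directly each row of $L(G)$ sums to $d_i - d_i = 0$, so $L(G)\vec 1 = \vec 0$; hence $0$ is an eigenvalue and $\lambda_1 = 0$.

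The main obstacle, and the only step requiring the connectivity hypothesis, is the strict inequality $\lambda_1 < \lambda_2$, equivalently that the zero eigenvalue is simple (the null space of $L(G)$ is one-dimensional, spanned by $\vec 1$). I would argue this from the quadratic form (\ref{eq-quadform}): if $x^T L(G) x = 0$ then every edge contributes zero, forcing $x^i = x^j$ for every edge $v_iv_j \in E$. Because $G$ is connected, any two vertices are joined by a path, and propagating the equality $x^i = x^j$ along the edges of such a path shows $x$ is constant across all vertices, so $x$ is a scalar multiple of $\vec 1$. Thus the kernel is exactly $\mathrm{span}\{\vec 1\}$, the geometric multiplicity of the eigenvalue $0$ is one, and since $L(G)$ is symmetric its algebraic and geometric multiplicities coincide; therefore $\lambda_2 > 0$, completing the proof.
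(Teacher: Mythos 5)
Your proof is correct and complete. The paper itself does not argue this proposition at all---it simply cites \cite{DecentralizedControl, PathPlanning, Consensus} as a standard result---and the argument you give (writing $L(G)=D-A$, the edge-sum quadratic form $x^TL(G)x=\sum_{v_iv_j\in E,\ i<j}(x^i-x^j)^2$ for positive semidefiniteness and the null vector, and propagating $x^i=x^j$ along paths to get simplicity of the zero eigenvalue from connectivity) is precisely the standard proof found in those references, so you have in effect supplied the details the paper delegates.
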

\begin{proof}
This is a standard result and can be found in \cite{DecentralizedControl, PathPlanning, Consensus}.
\end{proof}
Let $\Lambda={\rm diag}(0,\lambda_2,\dots,\lambda_n)$ and let $U=[\hat{u}_1\ \hat{u}_2\ \dots \ \hat{u}_n]$ (the hat notation indicates a unit vector) be an $n\times n$ matrix consisting of orthonormal eigenvectors of $L(G)$
corresponding to the eigenvalue ordering above, i.e., $U^T\,U=I$ and $L(G)\,U=U\,\Lambda$. Since we can factor the Laplacian as $L(G)=U\Lambda U^T$, the solution to Equation (\ref{eq-agreementdynamics}) with initial condition $q(0)=q_0$ is given by $$q(t)=e^{-L(G)t}q_0=e^{-U\Lambda U^Tt}q_0=Ue^{-\Lambda t}U^Tq_0,$$ which can be written as:
\begin{equation}
\label{agreementsolution}
    q(t)=\sum_{i=1}^n e^{-\lambda_i t}(\hat{u}^{T}_i q_0)\hat{u}_i.
    \end{equation}
\begin{definition}
Given a set of $n$ multi-agents, its
\emph{agreement set} $\mathcal{A}\subset\mathbb{R}^n$ is defined as the subspace $span\{\vec{1}\}$. A \emph{consensus joint state value} is an element of $\mathcal{A}$, and the common value of its (identical) components is called its \emph{agreed state value}. 
\end{definition}
The next proposition shows that when the state of a network of multi-agents satisfies the agreement protocol, the state converges to the consensus joint state value whose agreed state value is obtained by averaging the state's initial components.
\begin{proposition}
Let $G(V,E)$ be a network of $n$ agents whose state $q(t)$ satisfies the agreement protocol $\dot{q}=-L(G)q$. Then
$\lim_{t\rightarrow\infty} q(t)=\alpha \vec 1$, where
$\alpha=\frac{\vec 1^{T}q(0)}{n}\in\mathbb{R}$ is the average of the initial states. In addition, since $\lambda_2$ is $L(G)$'s smallest nonzero eigenvalue, its magnitude will determine the rate of convergence to $\alpha\vec 1$ 
(\cite{Consensus, DecentralizedControl}). 
\end{proposition}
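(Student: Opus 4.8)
The plan is to read both claims directly off the closed-form solution \mbox{(\ref{agreementsolution})}, so no new machinery is needed beyond the spectral facts already in hand. First I would invoke the preceding proposition, which for a connected undirected graph gives the ordering $0=\lambda_1<\lambda_2\le\cdots\le\lambda_n$; the strict inequality $\lambda_1<\lambda_2$ is the crucial input, since it says the zero eigenvalue is simple. I then take the limit termwise in
$$q(t)=\sum_{i=1}^n e^{-\lambda_i t}(\hat u_i^T q_0)\hat u_i.$$
For $i\ge 2$ we have $\lambda_i>0$, so $e^{-\lambda_i t}\to 0$ as $t\to\infty$, while the $i=1$ term is constant because $e^{-\lambda_1 t}=1$ for all $t$. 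Hence $\lim_{t\to\infty}q(t)=(\hat u_1^T q_0)\,\hat u_1$.

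The one step that requires a small argument is identifying $\hat u_1$. Because $\lambda_1=0$ is simple its eigenspace is one-dimensional, and by the earlier proposition $\vec 1$ lies in that eigenspace; normalizing gives $\hat u_1=\vec 1/\sqrt n$. Substituting,
$$\lim_{t\to\infty}q(t)=\Big(\tfrac{1}{\sqrt n}\,\vec 1^{\,T} q_0\Big)\tfrac{1}{\sqrt n}\,\vec 1=\frac{\vec 1^{\,T}q(0)}{n}\,\vec 1=\alpha\vec 1,$$
which is exactly the asserted consensus value, with agreed state value equal to the average of the initial components.

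For the rate claim I would subtract the limit and estimate the remaining tail. Writing
$$q(t)-\alpha\vec 1=\sum_{i=2}^n e^{-\lambda_i t}(\hat u_i^T q_0)\hat u_i,$$
orthonormality of the $\hat u_i$ yields $\|q(t)-\alpha\vec 1\|^2=\sum_{i=2}^n e^{-2\lambda_i t}(\hat u_i^T q_0)^2\le e^{-2\lambda_2 t}\|q_0\|^2$, so that $\|q(t)-\alpha\vec 1\|\le e^{-\lambda_2 t}\|q_0\|$. Thus the convergence is exponential, and the slowest surviving mode decays precisely like $e^{-\lambda_2 t}$ whenever $q_0$ has a nonzero component along $\hat u_2$; this is the sense in which $\lambda_2$ determines the rate.

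I expect no genuine obstacle here: the entire argument is bookkeeping on the spectral decomposition already recorded in \mbox{(\ref{agreementsolution})}. The only place where real care is needed is the simplicity of the zero eigenvalue. Without $\lambda_2>0$ — that is, without the connectivity hypothesis — the limit would be the orthogonal projection of $q_0$ onto the full (higher-dimensional) null space of $L(G)$ rather than the scalar average $\alpha\vec 1$, so I would be sure to flag that connectivity enters the proof at exactly this point.
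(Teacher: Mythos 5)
Your argument is correct and follows essentially the same route as the paper: both read the limit off the spectral expansion (\ref{agreementsolution}), using $\lambda_1=0$ simple with eigenvector $\hat u_1=\vec 1/\sqrt n$ and $\lambda_i>0$ for $i\ge 2$ to kill the remaining terms, and both attribute the convergence rate to the slowest surviving mode $e^{-\lambda_2 t}$. Your explicit norm bound $\lVert q(t)-\alpha\vec 1\rVert\le e^{-\lambda_2 t}\lVert q_0\rVert$ is just a slightly more quantitative version of the paper's remark that $e^{\lambda_2 t}(q^i(t)-\alpha)$ converges.
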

\begin{proof}
Since $\lambda_1=0$ and $\lambda_i>0$ for $i\geq 2$ we have,
from Equation (\ref{agreementsolution}), that
$q(t)$ converges to the consensus joint state value 
$(\hat{u}^{T}_1 q_0)\hat{u}_1=\frac{\vec 1^{T}q_0}{n}\vec 1=\alpha\vec 1\in \mathcal{A}$ as $t\rightarrow\infty$,
where $\frac{\vec 1^{T}q_0}{n}=\alpha\in\mathbb{R}$ is its agreed state value. Moreover, Equation (\ref{agreementsolution}) implies that
$e^{\lambda_2 t}(q^i(t)-\alpha)$ converges, as $t\rightarrow\infty$, for all $1\leq i\leq n$.
\end{proof}
The point $\displaystyle\lim_{t\rightarrow\infty}q(t)=\frac{\vec 1^{T}q_0}{n}\vec 1=\alpha\vec 1\in\mathbb{R}^n$ is precisely
the orthogonal projection of $q_0$ onto the agreement subspace
and therefore minimizes the quantity $\lVert q-q_0\rVert$ over all possible $q\in \mathcal{A}$,
where $\lVert \cdot\rVert$ is the standard Euclidean distance.
Furthermore, $\frac{\vec 1^{T}q(t)}{n}$ is a constant of motion since
\begin{align}
\frac{d}{dt}\left(\frac{\vec 1^{T}q(t)}{n}\right)= & \frac{\vec 1^{T}\dot{q}(t)}{n}=\frac{\vec 1^{T}(-L(G)q(t))}{n}
=-\frac{q^T(t)L^T(G)\vec{1}}{n}\\= & -\frac{q^T(t)L(G)\vec{1}}{n}=-\frac{q^T(t)\vec{0}}{n}=0
\end{align}
for arbitrary values of $t$. Hence the agreed state value $\alpha=\frac{\vec 1^{T}q_0}{n}$ is completely determined at $t=0$. 

If the state of each agent is a vector quantity rather than a scalar value, the agreement protocols for the components can still be written in a single matrix equation. Indeed, assume we desire agreement on $r$ scalar values. This means that to each agent $i$ we associate a vector value $q^i(t)=(q_1^i(t),\cdots,q_r^i(t))$ and we let $Q$ be the $n\times r$ matrix whose rows are given by the $q^i$,
\begin{equation}
    Q=\left[\begin{array}{ccc}
    q_1^1&\cdots&q_r^1\\
    q_1^2&\cdots&q_r^2\\
    \vdots & \vdots& \vdots\\
    q_1^n&\cdots&q_r^n\\
    \end{array}\right]
\end{equation}
and we can write the agreement dynamics as:

\begin{equation}
\label{eqdiff-laplacian}
    \dot Q(t)=-L(G)Q(t).
\end{equation}
We introduce $q^i(0)=(q^i_1(0),\cdots, q^i_r(0))$ the initial row vector for the state of agent $i$ and, more importantly, $q_k(0)=(q_k^1(0),\cdots, q_k^n(0))^T$ the initial column vector for a given state value over all agents. Since $L(G)=U\Lambda U^T$, we have that the solution of (\ref{eqdiff-laplacian}) is given by:
$Q(t)=e^{-L(G)t}Q(0)=e^{-U\Lambda U^Tt}Q(0)=Ue^{-\Lambda t}U^TQ(0)$, which can be written as:
\begin{equation}
    Q(t)=\Bigg[\sum_{i=1}^n e^{-\lambda_it}\hat{u}_i^Tq_1(0)\hat{u}_i\,\,\cdots\,\,\sum_{i=1}^n e^{-\lambda_it}\hat{u}_i^Tq_r(0)\hat{u}_i\Bigg]
\end{equation}
$$
    =[e^{-\lambda_1t}\hat{u}_1^Tq_1(0)\hat{u}_1\,\,\cdots\,\,e^{-\lambda_1t}\hat{u}_1^Tq_r(0)\hat{u}_1]+\sum_{i=2}^n[e^{-\lambda_it}\hat{u}_i^Tq_1(0)\hat{u}_i\,\,\cdots\,\, e^{-\lambda_it}\hat{u}_i^Tq_r(0)\hat{u}_i]
$$
$$
    =\Bigg[\frac{\vec{1}^Tq_1(0)}{n}\vec{1}\,\,\cdots\,\,\frac{\vec{1}^Tq_r(0)}{n}\vec{1}\Bigg]+\sum_{i=2}^n[e^{-\lambda_it}\hat{u}_i^Tq_1(0)\hat{u}_i\,\,\cdots\,\, e^{-\lambda_it}\hat{u}_i^Tq_r(0)\hat{u}_i]
$$
\begin{equation}
    =\Big[\alpha_1\vec{1}\,\,\cdots\,\,\alpha_r\vec{1}\Big]+\sum_{i=2}^n[e^{-\lambda_it}\hat{u}_i^Tq_1(0)\hat{u}_i\,\,\cdots\,\, e^{-\lambda_it}\hat{u}_i^Tq_r(0)\hat{u}_i]
\end{equation}
where $\frac{\vec{1}^Tq_k(0)}{n}=\alpha_k\in\mathbb{R}$ is the
agreed state value for each scalar state, $1\leq k\leq r$.

Note that here we assume the states $q_k$, measured by the agents, are shared over the same network $G(V,E)$. Hence the states share a common graph Laplacian, $L(G)$. In more general situations each state variable could have a distinct network,  leading to an agreement protocol of the form: $\dot{x}(t)=-L(G_x)x(t),\dots,\dot{z}(t)=-L(G_z)z(t)$.
\subsection{Example:}
Consider a collection of $n$ agents required to meet at a single location, not given in advance, and the agents do not have access to their global positions. Rather, all they can measure is their relative
distance with respect to their neighbors and they have to agree on their spatial coordinates $x,y$ and $z$. In this case, $r=3$ and $q^i(t)=(q^i_1(t),q^i_2(t),q^i_3(t))^T=(x^i(t),y^i(t),z^i(t))^T\in\mathbb{R}^3$ with $Q\in\mathbb{R}^{n\times 3}$ as an example of the agreement protocol in action. By executing the agreement protocol, the convergence to a unique rendezvous point $(\alpha_x,\alpha_y,\alpha_z)$ is guaranteed
for any arbitrary initial conditions: $(x^1(0),y^1(0),z^1(0)),\dots,(x^n(0),y^n(0),z^n(0))$ (i.e., any initial spatial configuration for the $n$ agents)
so long as $G(V,E)$ is a connected graph.
\subsection{Multi-agent Trajectories}
In this section, $\hat{e}_j$ denotes the standard unit vector of all $0$'s
with a single $1$ in the $j$th component. We use $u_1$ to denote the vector $\vec 1$, which is an eigenvector of $L(G)$ with eigenvalue $\lambda_1=0$. With this notation, $\hat{u}_1=\dfrac{\vec 1}{\sqrt{n}}$.
\begin{lemma}
\label{lemma1-th1}
Assume agent $j$ is connected to every other agent, and set $u_1=\vec 1$. Then, $u_1-n\hat{e}_j$ is an eigenvector for $L(G)$ with corresponding eigenvalue $\lambda_n=n$:
\begin{equation}
    L(G)(u_1-n\hat{e}_j)=n(u_1-n\hat{e}_j).
\end{equation}
\end{lemma}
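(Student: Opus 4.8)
The plan is to verify the claimed eigenvalue equation by a direct computation that exploits the structure of the graph Laplacian $L(G) = D - A$, where $D$ is the diagonal degree matrix and $A$ is the adjacency matrix. First I would use linearity to split
\begin{equation*}
L(G)(u_1 - n\hat{e}_j) = L(G)u_1 - n\,L(G)\hat{e}_j,
\end{equation*}
and then immediately kill the first term via the Proposition above, which gives $L(G)u_1 = L(G)\vec{1} = \vec{0}$. This reduces the whole problem to understanding the single vector $L(G)\hat{e}_j$, which is nothing but the $j$th column of $L(G)$.

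Next I would read off that column from the hypothesis that agent $j$ is adjacent to every other agent. This forces $\deg(v_j) = n-1$, so the diagonal entry is $L_{jj} = n-1$, while every off-diagonal entry $L_{ij}$ with $i \neq j$ equals $-1$, since $v_iv_j \in E$ for all $i \neq j$. Assembling the column therefore gives
\begin{equation*}
L(G)\hat{e}_j = (n-1)\hat{e}_j - \sum_{i \neq j}\hat{e}_i = (n-1)\hat{e}_j - (u_1 - \hat{e}_j) = n\hat{e}_j - u_1 .
\end{equation*}
Substituting back yields $L(G)(u_1 - n\hat{e}_j) = -n(n\hat{e}_j - u_1) = n(u_1 - n\hat{e}_j)$, which is exactly the claimed identity and shows that $n$ is an eigenvalue with eigenvector $u_1 - n\hat{e}_j$.

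The one point requiring slightly more than bookkeeping is the assertion that this eigenvalue is in fact the \emph{largest}, i.e.\ that it equals $\lambda_n$ in the ordering of the Proposition. For this I would invoke the standard spectral bound that every Laplacian eigenvalue of a graph on $n$ vertices satisfies $\lambda \leq n$. One clean way to see it is to write $L(G) + L(\bar{G}) = nI - J$, with $\bar{G}$ the complement graph and $J$ the all-ones matrix, and note that for any $x \perp \vec{1}$ one has $x^T L(G)x = n\lVert x\rVert^2 - x^T L(\bar{G})x \leq n\lVert x\rVert^2$, since $L(\bar{G})$ is positive semidefinite. Combined with the computation above, the value $n$ is attained, so it must equal $\lambda_n$. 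I expect this identification to be the only genuine (if minor) obstacle, since the eigenvector verification itself is a routine column computation.
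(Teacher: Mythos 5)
Your proof is correct and follows essentially the same route as the paper: kill $L(G)u_1=L(G)\vec 1=\vec 0$ by linearity, read off the $j$th column of the Laplacian from the hypothesis that $v_j$ has degree $n-1$ to get $L(G)\hat{e}_j=n\hat{e}_j-u_1$, and substitute back. Your additional verification that $n$ is in fact the largest eigenvalue (via $L(G)+L(\bar G)=nI-J$) is sound but goes beyond the paper's own proof, which simply cites the known bound $\lambda\leq n$ from the literature.
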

\begin{proof}
We have $L(G)(u_1-n\hat{e}_j)=L(G)u_1-nL(G)\hat{e}_j=0-nL(G)_j=-nL(G)_j$, where $L(G)_j$ is the $j$th column of the Laplacian $L(G)$.
But since $G(V,E)$ has the $j$th node with $n-1$ connections
we know $L(G)_j=(-1,-1,\dots,n-1,\dots,-1,-1)^T=n \hat{e}_j-u_1$, i.e.,
all $-1$'s except an $n-1$ in the $j$th component.
Hence $-n L(G)_j=n(u_1 - n\hat{e}_j)$
as claimed.

\end{proof}

\begin{definition}
We say that vertex $v_i$ is \emph{fully connected} if there exists an edge between $v_i$ and every other vertex $v_j\in V, j\neq i$. Therefore a fully connected vertex has $n-1$ edges. 
\end{definition}
Recall that $\hat{u}_1=\frac{\vec 1}{\sqrt{n}}$ and $\lambda_1=0$.
Furthermore, it is proven in \cite{Eigenvalues} that the eigenvalues of $L(G)$ are less than or equal to $n$ and the multiplicity of the eigenvalue $\lambda=n$ is equal to the number of fully connected vertices.
\begin{theorem}\label{thm: connected graph}
Suppose vertex $v_j$ is fully connected and that $q(t)$ satisfies the agreement protocol $\dot{q}=-L(G)q$. Then $q^j(t)$ is a straight line. 
\end{theorem}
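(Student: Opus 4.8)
The plan is to read the conclusion geometrically: in the vector-valued setting of the preceding subsection (where the state is the $n\times r$ matrix $Q(t)$ of Equation (\ref{eqdiff-laplacian})), agent $j$ carries a state $q^j(t)\in\mathbb{R}^r$, namely the $j$-th row, and ``$q^j(t)$ is a straight line'' means the trace of the curve $t\mapsto q^j(t)$ is contained in an affine line of $\mathbb{R}^r$. Concretely, I will show that $q^j(t)=\vec a+e^{-nt}\,\vec b$ for fixed vectors $\vec a,\vec b\in\mathbb{R}^r$; since $s\mapsto \vec a+s\,\vec b$ parametrizes a straight line and $s=e^{-nt}$ is scalar, this is exactly the desired statement. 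The whole point is that full connectivity of $v_j$ forces the orbit of that single agent to excite only two spectral modes of $L(G)$, rather than the generic mixture of $n$ modes appearing in Equation (\ref{agreementsolution}).

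First I would record the algebraic identity
\begin{equation*}
\hat e_j=\tfrac{1}{n}\,\vec 1-\tfrac{1}{n}\big(\vec 1-n\hat e_j\big),
\end{equation*}
which expresses the coordinate extractor $\hat e_j$ as a combination of $\vec 1$ and $\vec 1-n\hat e_j$. By Lemma \ref{lemma1-th1} the vector $\vec 1-n\hat e_j$ is an eigenvector of $L(G)$ with eigenvalue $\lambda_n=n$, while $\vec 1$ is an eigenvector with eigenvalue $\lambda_1=0$; these directions are orthogonal since $\vec 1^{T}(\vec 1-n\hat e_j)=0$. Writing $q^j(t)=\hat e_j^{T}q(t)$ and substituting the identity splits the trajectory into its projections along these two eigendirections.

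Next I would evaluate each projection in turn. The $\vec 1$-component is governed by the constant of motion established just after the previous proposition: $\vec 1^{T}q(t)$ is constant, so $\tfrac{1}{n}\vec 1^{T}q(t)$ equals the agreement value $\vec\alpha$ for all $t$. For the other component, set $w=\vec 1-n\hat e_j$; using symmetry of $L(G)$ together with $L(G)w=nw$ we get $\tfrac{d}{dt}\big(w^{T}q\big)=-w^{T}L(G)q=-(L(G)w)^{T}q=-n\,w^{T}q$, hence $w^{T}q(t)=e^{-nt}\,w^{T}q(0)$ is a single decaying exponential. Combining the two pieces gives $q^j(t)=\vec\alpha-\tfrac{1}{n}e^{-nt}\,w^{T}q(0)$, which has the promised form $\vec a+e^{-nt}\vec b$ with $\vec a=\vec\alpha$ and $\vec b=-\tfrac{1}{n}w^{T}q(0)\in\mathbb{R}^r$.

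I expect the only genuine obstacle to be conceptual rather than computational: recognizing that ``straight line'' is a statement about the spatial trace of a vector-valued orbit (so that a lone exponential mode sweeps out a line, even though it is not affine in $t$), and seeing that full connectivity is exactly what confines $\hat e_j$ to the two-dimensional $L(G)$-invariant subspace $\mathrm{span}\{\vec 1,\ \vec 1-n\hat e_j\}$. As a cross-check I would also derive the same conclusion directly from the component equation: full connectivity yields $\dot q^j=-n\,q^j+\vec 1^{T}q$, and constancy of $\vec 1^{T}q$ reduces this to $\dot q^j=-n(q^j-\vec\alpha)$, whose solution is again $q^j(t)=\vec\alpha+(q^j(0)-\vec\alpha)e^{-nt}$. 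The degenerate case $q^j(0)=\vec\alpha$, in which the ``line'' collapses to a single point, should be noted but is harmless.
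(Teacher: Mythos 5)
Your proof is correct, and your primary route differs from the one in the paper. The paper's proof is the one-line ODE argument you relegate to your final ``cross-check'': full connectivity gives $\dot{q}^j=-n\,q^j+\sum_i q^i$, the conserved quantity $\sum_i q^i=n\alpha$ turns this into Newton's law of cooling $\dot{q}^j=-n(q^j-\alpha)$, and the explicit solution $q^j-\alpha=(q^j(0)-\alpha)e^{-nt}$ traces a line; the paper then remarks that the identical computation works row-by-row for the $n\times r$ state matrix. Your main argument instead decomposes the coordinate extractor as $\hat e_j=\tfrac1n\vec 1-\tfrac1n(\vec 1-n\hat e_j)$ and uses Lemma \ref{lemma1-th1} to see that $\hat e_j$ lies in the $L(G)$-invariant plane spanned by the eigenvectors for $\lambda=0$ and $\lambda=n$, so that $\hat e_j^{T}q(t)$ excites exactly two spectral modes, one constant and one proportional to $e^{-nt}$. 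Both yield the same formula, but they buy different things: the paper's version is shorter and needs no spectral information beyond conservation of $\vec 1^{T}q$, while yours explains structurally \emph{why} the line appears (a single decaying mode sweeps out a ray toward $\alpha\vec 1$) and actually puts Lemma \ref{lemma1-th1} to work, which the paper states but never uses in its proof of Theorem \ref{thm: connected graph}. Your explicit framing of ``straight line'' as a statement about the trace in $\mathbb{R}^r$ rather than affineness in $t$, and your remark about the degenerate case $q^j(0)=\vec\alpha$, are both points the paper leaves implicit.
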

\begin{proof}
We know that $\displaystyle\sum_{i=1}^n q^i=n\alpha$ is a constant. We also know that:
\begin{equation}
\dot{q}^j=-n\,q^j+\sum q^i=-n(q^j-\alpha).
\end{equation}
This simple ODE (Newton's Law of Cooling) implies that \begin{equation}
    q^j-\alpha=(q^j(0)-\alpha)e^{-nt}
\end{equation} which is a straight line.
\end{proof}
Note that the proof is the same regardless of whether $q^j$ is a scalar or row $j$ of an $n\times r$ state matrix $Q(t)$, provided it satisfies the agreement protocol $\dot{Q}=-L(G) Q$.

\subsubsection{Rendezvous Mission, Unweighted Network}
\label{sectionredezvous1}
In this section we illustrate some of our prior results with a rendezvous mission between four agents.  We let $q^i(t)=(x^i(t),y^i(t),z^i(t))$ denote the position of agent $i$ in $\mathbb{R}^3$. We assume the agents to be initially distributed as follows:
\begin{equation*}
    q^1(0)=(4,17,24),\;q^2(0)=(18,10,32),\; q^3(0)=(15,10,26),\;q^4(0)=(4,2,35).
\end{equation*}
In this simulation we assume the communication network between the agents is fixed and unweighted. From our algorithm, the vector of agreed state values (the agreed rendezvous position) is given by:
\begin{equation}
    q^*=(10.25,9.75,29.25).
\end{equation}
In Figure \ref{Fig-rendezvous1} we illustrate how two different communication networks produce different trajectories, even though the agreed rendezvous position is the same for each network since it depends only on the initial spatial positions of the agents. In the first scenario (solid curve), we assume the following: agent 1 is connected to agent 2; agent 2 is connected to agents 1, 3 and 4; agent 3 is connected to agents 2, 4; agent 4 is connected to agents 2, 3. This is represented by the following graph Laplacian:
\begin{equation}
    L_1=\left(\begin{array}{cccc}1&-1&0&0\\-1&3&-1&-1\\0&-1&2&-1\\0&-1&-1&2\end{array}\right).
\end{equation}
In the second scenario (dashed curve), we assume: agent 1 is connected to agents 3, 4; agent 2 is connected to agent 3; agent 3 is connected to agents 1, 2; agent 4 is connected to agent 1:
\begin{equation}
    L_2=\left(\begin{array}{cccc}2&0&-1&-1\\0&1&-1&0\\-1&-1&2&0\\-1&0&0&1\end{array}\right).
\end{equation}
In the first scenario agent 2 moves along a straight line since it is connected to all other agents. 
\begin{figure}[h!]
\includegraphics[width=0.7\linewidth]{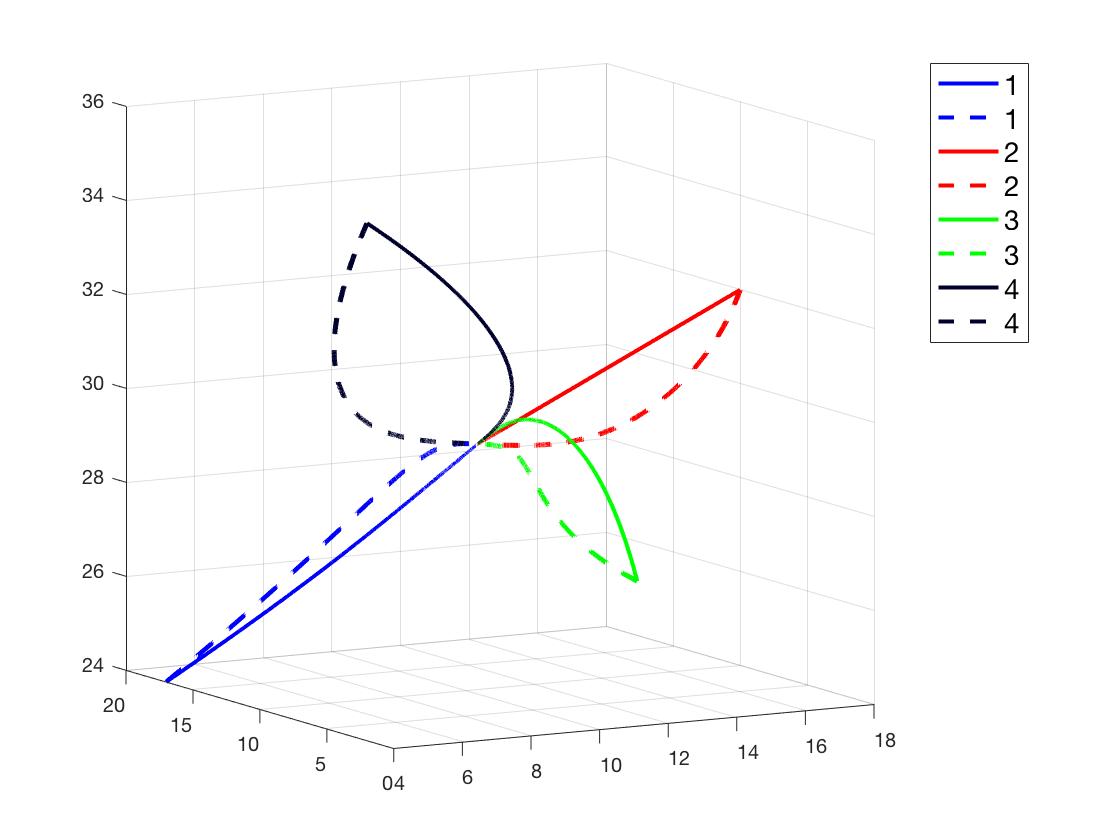}
\caption{Rendezvous Missions with Unweighted Network. Displays two different communication network scenarios for a 4-agent rendezvous mission. Agreement positions coincide but trajectories differ.}
\label{Fig-rendezvous1}
\end{figure}
In Figure \ref{Fig-rendezvous1-xyz} we compare the trajectories for agent 1 for each spatial coordinate. The eigenvalues for scenario 1 are given by $\{0,1,3,4\}$ and the eigenvalues for scenario 2 are given by $\{0,0.586,2,3.414\}$.

\begin{figure}[h!]
\includegraphics[width=.32\linewidth]{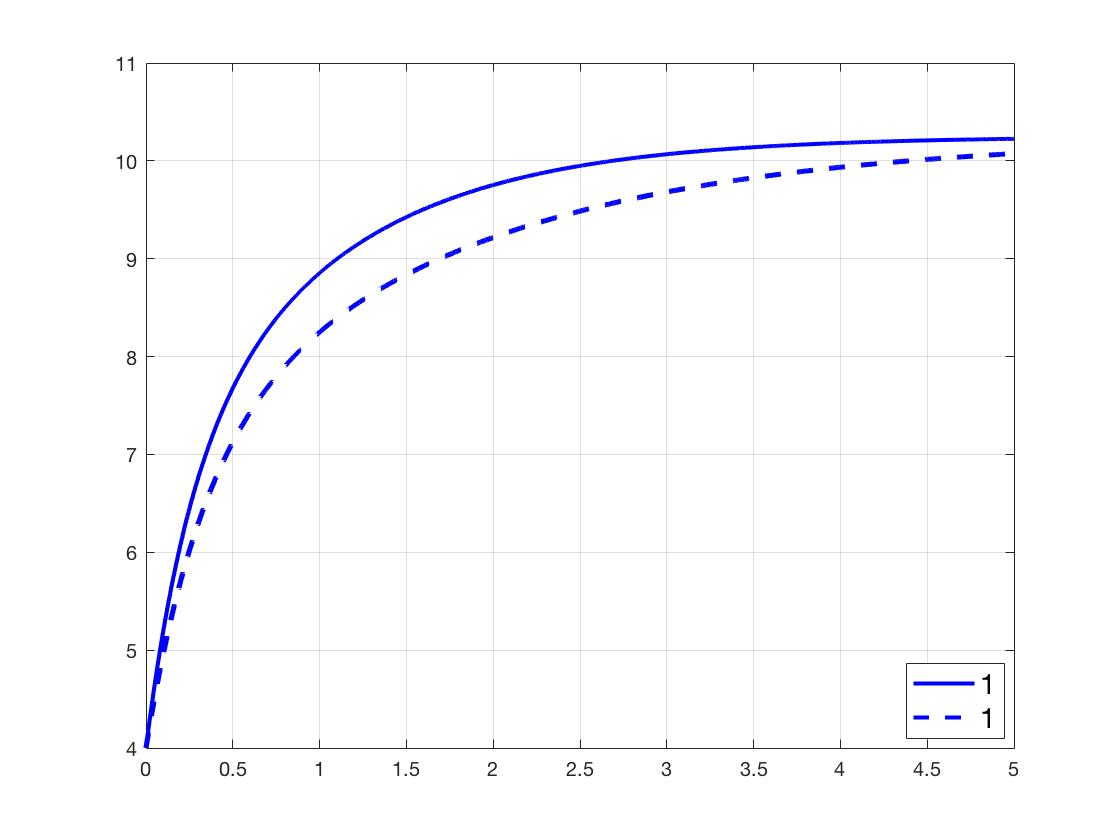}
\includegraphics[width=.32\linewidth]{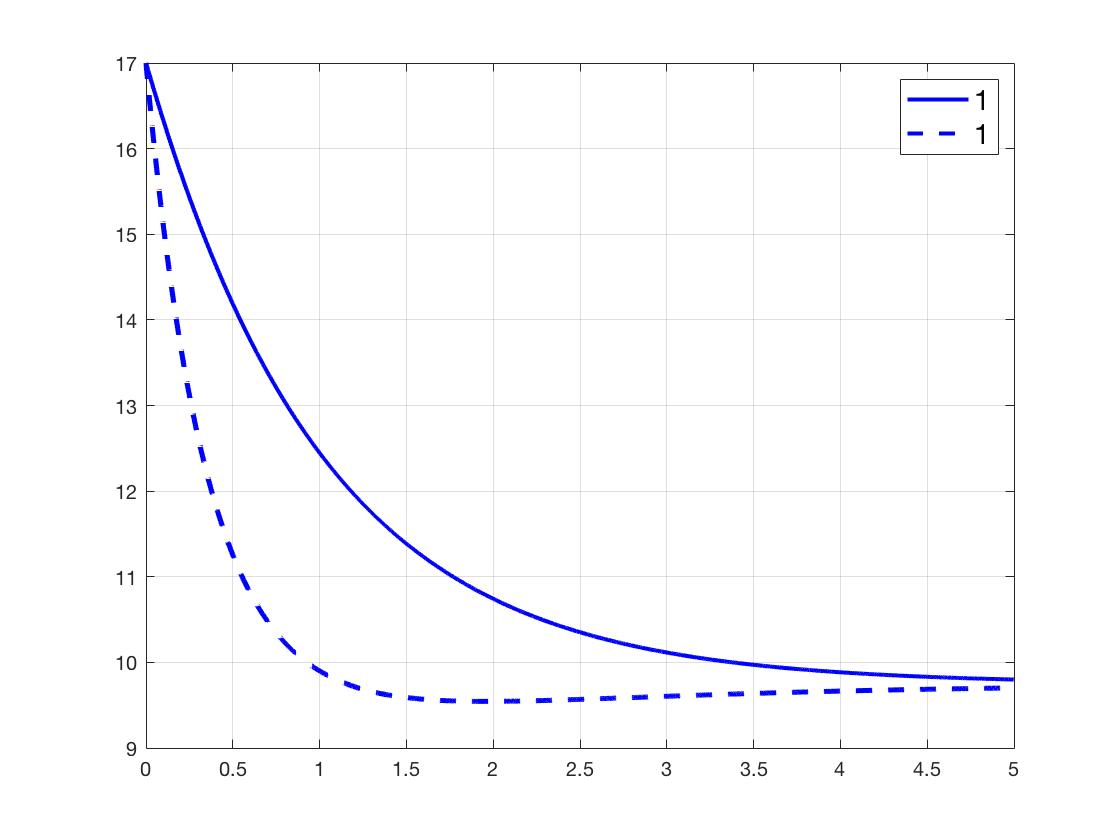}
\includegraphics[width=.32\linewidth]{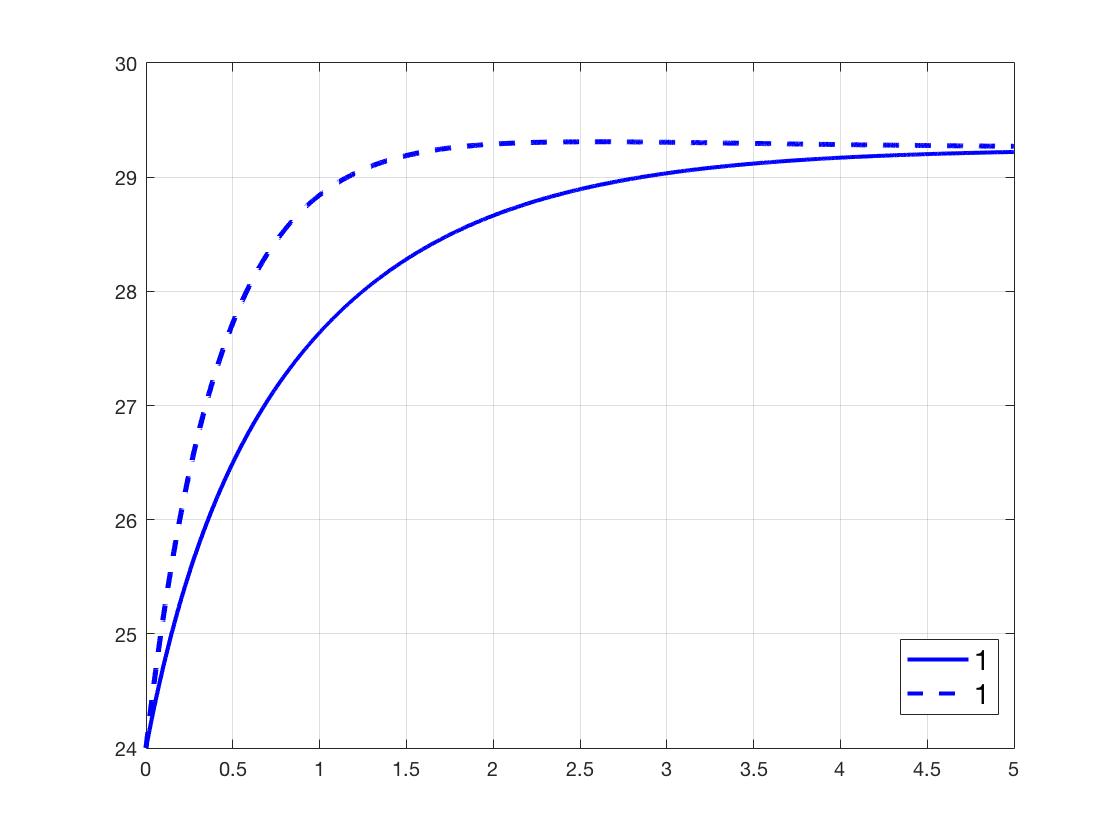}
\caption{Comparison of the $x,y,z$-motions for agent 1 for the scenarios of Fig. \ref{Fig-rendezvous1} corresponding to rendezvous missions with Unweighted Network.}
\label{Fig-rendezvous1-xyz}
\end{figure}

\subsection{Weighted Network}
We can imagine a scenario where it is desirable to account for the  communications strength between vehicles, or for how that strength evolves with time. Interestingly, the agreed state value is an invariant, and only the trajectories leading to it are affected \cite{Consensus}. Note that not all extensions are immediately straightforward, however, since the explicit solution may be difficult to write down. Not all properties can be expected to carry over; for example, the linear trajectory proof relied on cancellation due to the integer eigenvalues.

We now assume that the edges are weighted, i.e., edges carry a numerical weight. If the graph $G_w$ is a weighted graph, then the off-diagonal entries of the Laplacian $L(G_w)$ will not be limited to only $0$ and $-1$. Instead, the $-1$'s corresponding to edges in the graph would be replaced with $-w_{ij}=-w_{ji}$ to account for the weights. The diagonal entries will once again be the absolute value of the row or column sum of the non-diagonal entries. Equations (\ref{eqqdot1}) and (\ref{eq-agreementdynamics}) therefore become:
\begin{align}
    \dot{q}^i(t)&=-\sum_{j\in N(i)}w_{ij}\big(q^i(t)-q^j(t)\big)\\
    \dot q(t)&=-L(G_w)q(t).
\end{align}
 The proof that the  consensus  joint  state  value is given by  $\alpha=\frac{\vec{1}q_0}{n}\vec{1}$ comes from the same logic as before since $L(G_w)$ is a symmetric positive
semidefinite matrix with eigenvalues $0=\lambda_1<\lambda_2\leq\dots\leq\lambda_n$ and $L(G_w)\vec 1=\vec 0$. This is a consequence of the fact that weighting preserves the properties that $G_w(V,E)$ is connected and undirected. Moreover, 
each row and column of $L(G_w)$ still sums to zero regardless of whether or not the edges of $G_w(V,E)$ are weighted. Hence, the values of $\lambda_2,\dots\lambda_n$ and $\hat{u}_2\dots \hat{u}_n$, will change, but we still have $\lambda_1=0$ and $\hat{u}_1=\frac{\vec{1}}{\sqrt{n}}$. Finally, in full generality:

\begin{theorem}
Consider the dynamical system:
\begin{equation}
    \dot q^i(t)=-\sum_{j=1}^n w_{ij}(t)[q^i(t)-q^j(t)]\quad(1\leq i\leq n)
\end{equation}
with initial conditions $q(0)=(q^1(0),\cdots q^n(0))^T$.

For each $i,j$, assume $w_{ij}:[0,\infty)\to[0,\infty)$ and $w_{ij}(t)=w_{ji}(t)$.\\

Define the Laplacian matrix $L(G_w)(t)$ by
\begin{equation}
L(G_w)(t)=\left[\begin{array}{cccc}
    \sum_{j=1}^nw_{1j}(t)&-w_{12}(t)&\dots&-w_{1n}(t)\\
    -w_{12}(t)&\sum_{j=1}^nw_{2j}(t)&\dots&-w_{2n}(t)\\
    \vdots&\vdots&\vdots&\vdots\\
    -w_{1n}(t)&-w_{2n}(t)&\dots&\sum_{j=1}^nw_{nj}(t)\\
    \end{array}\right].
\end{equation}
Then the system of first order equations can be rewritten as: $\dot{q}(t)=-L(G_w)(t)q(t)$.
If the rank of $L(G_w)(t)$ is $n-1$ for all $t$ then we have:
$$\lim_{t\rightarrow\infty}q(t)=\bigg[\frac{1}{n}\sum_{i=1}^nq^i(0)\bigg]\vec{1}.$$
\end{theorem}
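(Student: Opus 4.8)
The plan is to reduce everything to the dynamics of the \emph{disagreement vector} and then control its norm by a Lyapunov argument. First I would record the three structural facts that survive the passage to time-varying weights: for every fixed $t$ the matrix $L(G_w)(t)$ is symmetric and positive semidefinite (it is a weighted graph Laplacian with $w_{ij}(t)\ge 0$), it annihilates the all-ones vector, $L(G_w)(t)\vec 1=\vec 0$, and --- because each row sums to zero --- so does $\vec 1$ on the left, $\vec 1^{\,T}L(G_w)(t)=\vec 0^{\,T}$. The last identity gives at once that the average is a constant of motion: $\frac{d}{dt}\big(\vec 1^{\,T}q(t)\big)=-\vec 1^{\,T}L(G_w)(t)q(t)=0$, so $\frac1n\vec 1^{\,T}q(t)\equiv\alpha:=\frac1n\sum_{i=1}^n q^i(0)$ for all $t$. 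This already pins down the \emph{candidate} limit $\alpha\vec 1$; what remains is genuine convergence.

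Next I would set $\delta(t)=q(t)-\alpha\vec 1$. Since $\vec 1^{\,T}\delta(t)=\vec 1^{\,T}q(t)-\alpha n=0$, the disagreement vector lives in the hyperplane $\vec 1^{\perp}$ for all time, and because $L(G_w)(t)\vec 1=\vec 0$ it satisfies the same equation $\dot\delta=-L(G_w)(t)\delta$. The goal becomes $\delta(t)\to\vec 0$. Here the rank hypothesis enters: rank $n-1$ together with symmetry and positive semidefiniteness forces $\ker L(G_w)(t)=\mathrm{span}\{\vec 1\}$, so on the invariant complementary subspace $\vec 1^{\perp}$ the quadratic form is strictly positive, and in fact $\delta^{\,T}L(G_w)(t)\delta\ge\lambda_2(t)\lVert\delta\rVert^2$, where $\lambda_2(t)>0$ is the smallest nonzero eigenvalue. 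Taking the Lyapunov function $V(t)=\tfrac12\lVert\delta(t)\rVert^2$ and differentiating gives $\dot V=\delta^{\,T}\dot\delta=-\delta^{\,T}L(G_w)(t)\delta\le -\lambda_2(t)\lVert\delta\rVert^2=-2\lambda_2(t)V$. Gr\"onwall's inequality then yields $V(t)\le V(0)\exp\!\big(-2\int_0^t\lambda_2(s)\,ds\big)$, and hence $\lVert q(t)-\alpha\vec 1\rVert\to 0$ the moment we know $\int_0^\infty\lambda_2(s)\,ds=+\infty$.

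This last integral condition is exactly where I expect the real difficulty, and where the bare hypothesis ``rank $=n-1$ for all $t$'' looks too weak. Pointwise the rank condition guarantees only $\lambda_2(t)>0$, not a uniform bound, and if $\lambda_2(t)$ decays too quickly the integral converges and consensus can fail. The smallest illustrative case is $n=2$ with a single weight $w(t)=w_{12}(t)=e^{-t}>0$: the rank is $1$ for every $t$, yet the gap obeys $\tfrac{d}{dt}(q^1-q^2)=-2e^{-t}(q^1-q^2)$ and tends to $\big(q^1(0)-q^2(0)\big)e^{-2}\neq 0$, so the agents do \emph{not} reach agreement.

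Consequently, to push the Lyapunov estimate through one must supplement the stated hypothesis with a uniform-connectivity condition --- for example a positive lower bound $\lambda_2(t)\ge c>0$, equivalently a dwell-time/integral condition guaranteeing $\int_0^\infty\lambda_2=\infty$, which holds automatically when the nonzero weights stay bounded away from $0$. Under any such condition the bound $V(t)\le V(0)e^{-2ct}$ gives exponential convergence of $q(t)$ to $\alpha\vec 1=\big[\tfrac1n\sum_{i=1}^n q^i(0)\big]\vec 1$, completing the argument; and the same scalar computation applies row-by-row if the state is vector-valued, as in the averaged-consensus formula established earlier.
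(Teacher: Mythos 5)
Your argument uses exactly the same Lyapunov function as the paper --- the paper takes $V(q)=\tfrac12\lVert q-q^*\rVert^2$ with $q^*=\alpha\vec 1$, which is identical to your $\tfrac12\lVert\delta\rVert^2$ --- and the same preliminary facts (symmetry, positive semidefiniteness, $\vec 1^{\,T}q(t)$ conserved, kernel equal to $\mathrm{span}\{\vec 1\}$ from the rank hypothesis). Where you diverge is in how the decrease of $V$ is converted into convergence. The paper argues qualitatively: it establishes $\dot V\le 0$ with equality iff $q=q^*$, and then concludes that $V$ is ``strictly decreasing for all trajectories $q(t)\neq q^*$, hence $q(t)\to q^*$.'' For a time-varying system this last step is a non sequitur --- a nonnegative, strictly decreasing function need only converge to some limit $\ge 0$, not to $0$, and LaSalle-type invariance arguments are not available without uniformity in $t$. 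Your quantitative route via $\dot V\le -2\lambda_2(t)V$ and Gr\"onwall makes the obstruction precise: convergence requires $\int_0^\infty\lambda_2(s)\,ds=\infty$, and the pointwise rank condition gives only $\lambda_2(t)>0$, not this integral divergence.

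Your $n=2$ counterexample with $w_{12}(t)=e^{-t}$ is correct: the Laplacian has rank $1=n-1$ for every $t$, yet $q^1(t)-q^2(t)=(q^1(0)-q^2(0))\exp\!\big(-2(1-e^{-t})\big)\to(q^1(0)-q^2(0))e^{-2}\neq 0$, so consensus fails. The theorem as stated is therefore false, and the paper's proof contains the gap you identify. Your proposed repair --- a uniform lower bound $\lambda_2(t)\ge c>0$, or more generally the integral condition $\int_0^\infty\lambda_2=\infty$ (a persistence-of-connectivity assumption) --- is the standard and correct fix, and under it your Gr\"onwall bound completes the proof with an explicit exponential rate, which is strictly more information than the paper's argument even purports to give. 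In short: same Lyapunov skeleton, but your version is both rigorous and sharper, and your objection to the hypothesis is genuine.
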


\begin{proof}
By assumption, the matrix $L(G_w)$ is real and symmetric, therefore Hermitian.
Furthermore, it is diagonally dominant with non-negative diagonal entries.
Therefore it is positive semi-definite. As for the unweighted case, 
$\lambda_1=0$ with corresponding eigenvector $\vec{1}$. Since we assume the graph to be connected throughout the motion, the rank of $L$ is always $n-1$ and the multiplicity of the zero eigenvalue is one. We have for any time $t$:  $0=\lambda_1<\lambda_2(t)\leq\dots\leq\lambda_n(t)$
with a corresponding set of orthonormal
eigenvectors $\hat{u}_1=\frac{1}{\sqrt{n}}\vec{1},\hat{u}_2(t),\dots,\hat{u}_n(t)$.

Let $q^*=\bigg[\frac{1}{n}\sum_{i=1}^nq^i(0)\bigg]\vec{1}$.
Even if the $w_{ij}:[0,\infty)\to[0,\infty)$ are time-varying,
we can show with initial conditions $q(0)$ that the trajectories $q(t)\to q^*$ as $t\to\infty$.
A useful observation is that
$$\dot{q}(t)=-L(G_w)(t)q(t)\quad\Rightarrow\quad q(t)-q(0)=-\int_{0}^{t}L(G_w)(s)q(s)ds$$
$$\Rightarrow\quad\vec{1}^T\big(q(t)-q(0)\big)=-\vec{1}^T\bigg(\int_{0}^{t}L(G_w)(s)q(s)ds\bigg)$$
$$=-\int_{0}^{t}\vec{1}^TL(s)q(s)ds=-\int_{0}^{t}\vec{0}^Tq(s)ds=-\int_{0}^{t}0ds=0,$$
so that we have $\vec{1}^Tq(t)=\vec{1}^Tq(0)$ for all $t\in[0,\infty)$.

Let $V:\mathbb{R}^n\to\mathbb{R}$ be defined by:
$$q\mapsto V(q)=\frac{1}{2}\|q-q^*\|^2=\frac{1}{2}(q-q^*)^T(q-q^*).$$
Clearly, we have:
\begin{enumerate}
    \item $V(q)\geq 0$\label{i},
    \item $V(q)=0\iff q=q^*$\label{ii},
    \item $\dot{V}(q)=-(q-q^*)^TL(G_w)(q-q^*)\leq 0$\label{iii}
\end{enumerate}
since $-v^TL(G_w)v\leq 0$ for any $v\in\mathbb{R}^n$ since $L(G_w)$ is positive semi-definite.
Clearly $q=q^*$ implies $\dot{V}(q)=0$, but also $\dot{V}(q)=0$ implies $q=q^*$.
Let $\mathcal{A}=span\{\vec{1}\}=\mathcal{N}(L(G_w))$ represent the null space
of $L(G_w)$ and suppose $(q-q^*)^TL(G_w)(q-q^*)=0$.
This implies $q-q^*\in\mathcal{A}$.
But we know that $\vec{1}^Tq(t)=\vec{1}^Tq(0)$, and therefore $q-q^*\perp\mathcal{A}$ since
\begin{equation}
\vec{1}^T(q-q^*)=\vec{1}^T\big(q-\frac{\vec{1}^Tq(0)}{n}\vec{1}\big)=\vec{1}^Tq-\frac{\vec{1}^Tq(0)}{n}\vec{1}^T\vec{1}=\vec{1}^Tq-\vec{1}^Tq(0)=0.
\end{equation}
Finally, $q-q^*\in\mathcal{A}$ and $q-q^*\perp\mathcal{A}$ imply $q-q^*=0\Rightarrow q=q^*$. We conclude:
\begin{equation}
\label{iV}
    \dot{V}(q)=-(q-q^*)^TL(G_w)(q-q^*)=0\iff q=q^*.
\end{equation}
Therefore \ref{i}, \ref{ii}, \ref{iii} and Equation (\ref{iV}) imply that $V(q)$ is a non-negative function which is strictly decreasing for all trajectories $q(t)\neq q^*$
hence $q(t)\to q^*$ as $t\to\infty$, i.e.,
$$\lim_{t\rightarrow\infty}q(t)=\bigg[\frac{1}{n}\sum_{i=1}^nq_i(0)\bigg]\vec{1}$$
as claimed.

\end{proof}

\subsubsection{Rendezvous Mission, Weighted Network}
 Here we imagine a scenario taking into account the strength of communications between agents. As previously stated, the agreement value is an invariant, and only the trajectories leading to it will differ compared to not taking the strength of the signal into account. 
We assume four agents to be initially distributed as follows:
\begin{equation*}
    q^1(0)=(2,16,29)   ,\;q^2(0)=(14,20,31),\; q^3(0)=(10,1,25),\;q^4(0)=(10,14,36).
\end{equation*}
The agreement rendezvous position is given by:
\begin{equation}
    q^*=(9,12.75,30.25).
\end{equation}
In Figure \ref{Fig-rendezvous2} we illustrate how a weighted network impacts the trajectories. For both scenarios we assume: agent 1 is connected to agent 2; agent 2 is connected to agents 1, 3 and 4; agent 3 is connected to agents 2, 4; agent 4 is connected to agents 2, 3 (as in Section \ref{sectionredezvous1}):
\begin{equation}
    L_1=\left(\begin{array}{cccc}1&-1&0&0\\-1&3&-1&-1\\0&-1&2&-1\\0&-1&-1&2\end{array}\right).
\end{equation}
In the second scenario (dashed curves), we assume a weighted network with weights due to initial distance:
\begin{equation}
    L_2=\left(\begin{array}{cccc}12.8062&-12.8062&0&0\\-12.8062&41.9036&-20.3224&-8.7750\\0&-20.3224&37.3518&-17.0294\\0&-8.7750&-17.0294&25.8044\end{array}\right).
\end{equation}
It can be observed that for the weighted network, even though agent 2 is connected to all other agents, it does not move along a straight line anymore. 

\begin{figure}[h!]
\includegraphics[width=.7\linewidth]{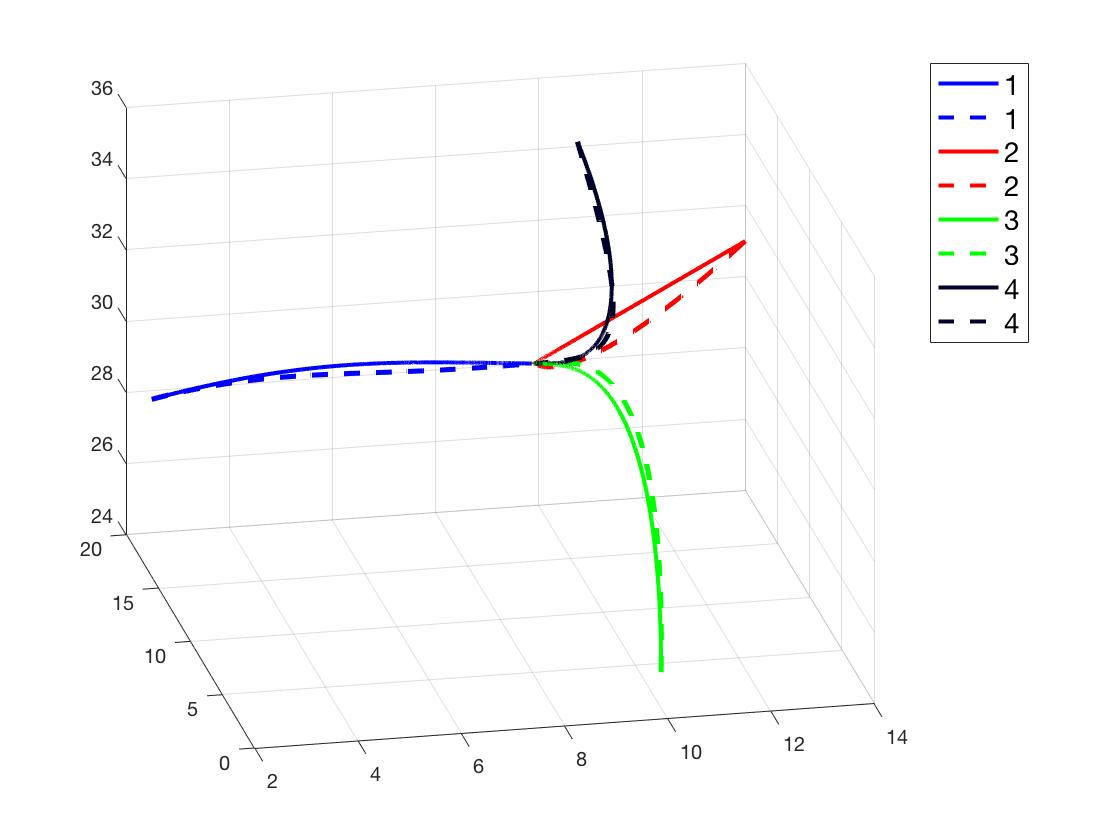}
\caption{Comparison between trajectories on rendezvous missions with unweighted  and weighted networks. The solid curves represents the trajectories for the unweighted network and the dashed ones for the weighted network. The two scenarios converge to the same agreement value.}
\label{Fig-rendezvous2}
\end{figure}

In Figure \ref{Fig-rendezvous2-xyz} we compare the trajectories for agent 1 for each of the coordinates of motion. The eigenvalues for the unweighted network are given by $\{0,1,3,4\}$ and the eigenvalues for the weighted network are given by $\{0,13.060,42.248,62.558\}$.

\begin{figure}[h!]
\includegraphics[width=.325\linewidth]{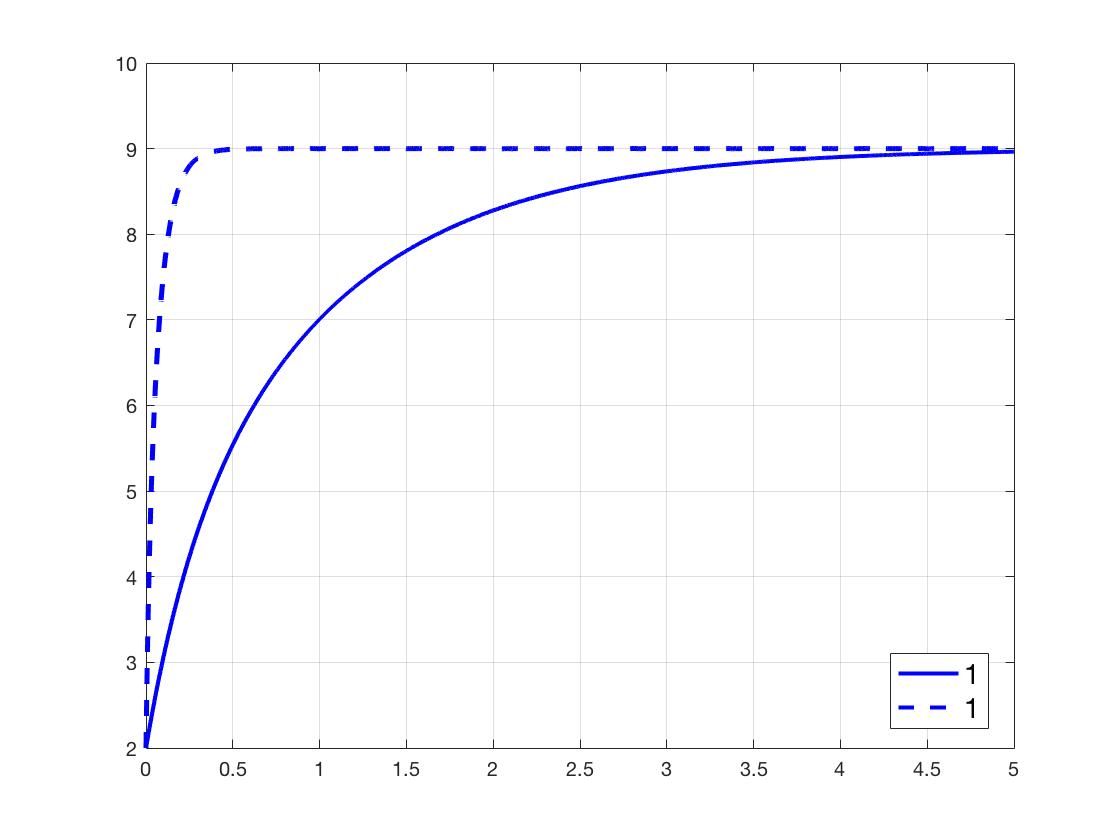}
\includegraphics[width=.325\linewidth]{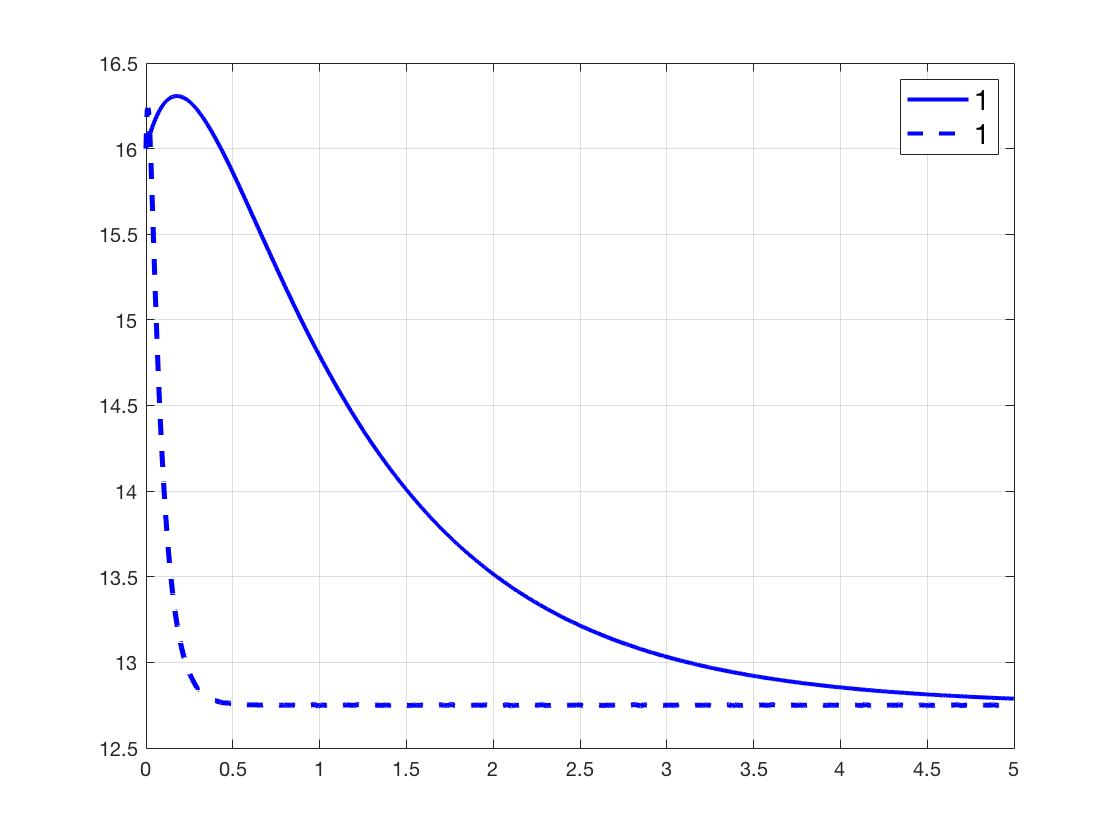}
\includegraphics[width=.325\linewidth]{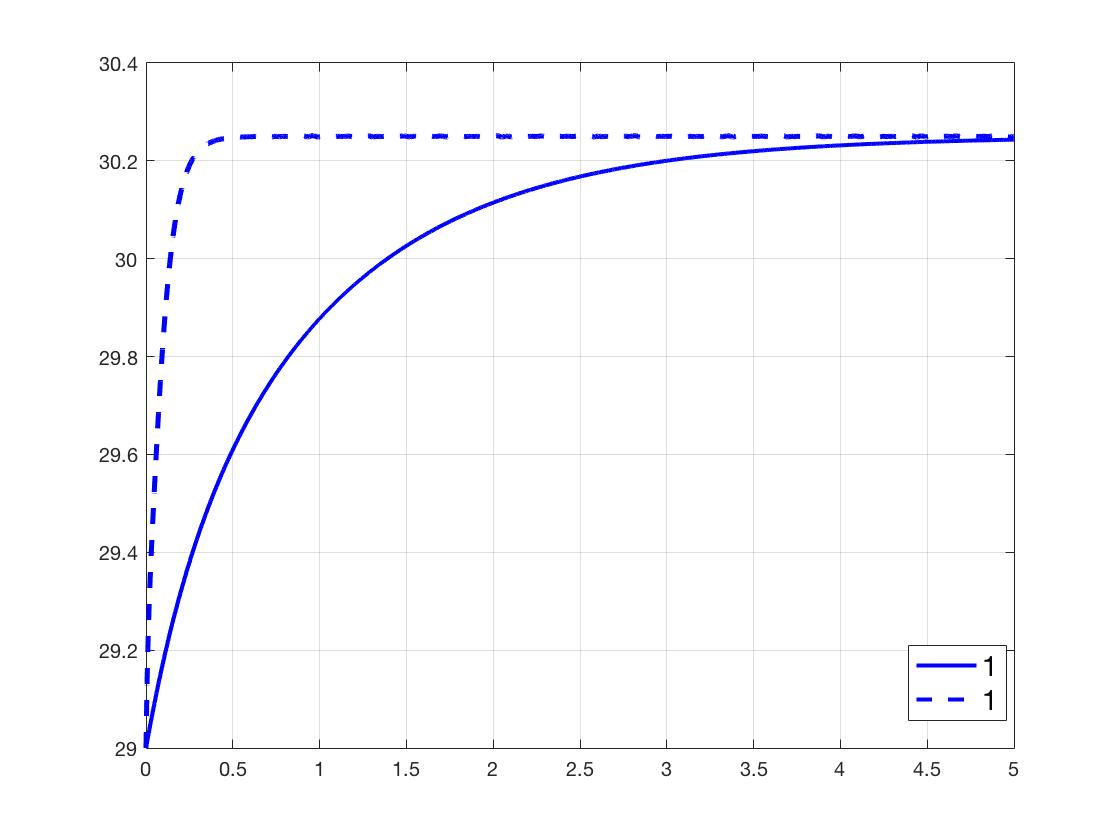}
\caption{Comparison of the $x,y,z$-motions for agent 1 for the rendezvous missions displayed in Figure \ref{Fig-rendezvous2}. Observe that each component converges much more rapidly for the weighted network, corresponding to the difference in their $\lambda_2$ values: $13.060>1$.}
\label{Fig-rendezvous2-xyz}
\end{figure}

\subsubsection{Rendezvous Mission, Time-varying Weighted Network}
The goal of this rendezvous mission is to compare a fixed network determined by initial proximity versus a time-varying weighted network, where the weights are the inter-agent distances that vary in time, and also edges are added as the preset proximity  threshold is reached. 

We assume four agents to be initially distributed as follows:
\begin{equation*}
    q^1(0)=(16,5,36)      ,\;q^2(0)=(19,19,29),\; q^3(0)=(12,16,33),\;q^4(0)=(14,1,26).
\end{equation*}
The agreement rendezvous position is given by:
\begin{equation}
    q^*=(15.25,10.25,31).
\end{equation}
In Figure \ref{Fig-rendezvous3} we illustrate how a weighted network impacts the trajectories. For the fixed network (solid curves) we have: agent 1 is connected to agents 3, 4; agent 2 is connected to agent 3; agent 3 is connected to agents 1, 3; agent 4 is connected to agent 1:
\begin{equation}
    L_1=\left(\begin{array}{cccc}2&0&-1&-1\\0&1&-1&0\\-1&-1&2&0\\-1&0&0&1\end{array}\right).
\end{equation}
In the time-varying scenario (dashed lines), the network changes throughout the trajectories. Here we provide the initial Laplacian and the final one:
\begin{equation}
    L_{2_i}=\left(\begin{array}{cccc}23.0375&0&-12.0830&-10.9545\\0&8.6023&-8.6023&0\\-12.0830&-8.6023&20.6854&0\\-10.9545&0&0&10.9545\end{array}\right),
    \end{equation}
\begin{equation}
    L_{2_f}=\left(\begin{array}{cccc}0.1465&-0.0523&-0.0482&-0.0460\\-0.0523&0.1501&-0.0450&-0.0528\\-0.0482&-0.0450&0.1457&-0.0525\\-0.0460&-0.0528&-0.0525&0.1513\end{array}\right).
\end{equation}
Initially the connectivity network is the same as for the fixed network but with weights, however as the agents get closer it changes, and all agents are connected in the end. The trajectories corresponding to the time-varying network here would be easier to approximate for a real quadcopter since they are ``straighter" (the sudden jump in information leads to straighter paths toward the rendezvous, however in this case, the weights prevent the perfect cancellation that would lead to true linearity.) 
\begin{figure}[h!]
\includegraphics[width=0.7\linewidth]{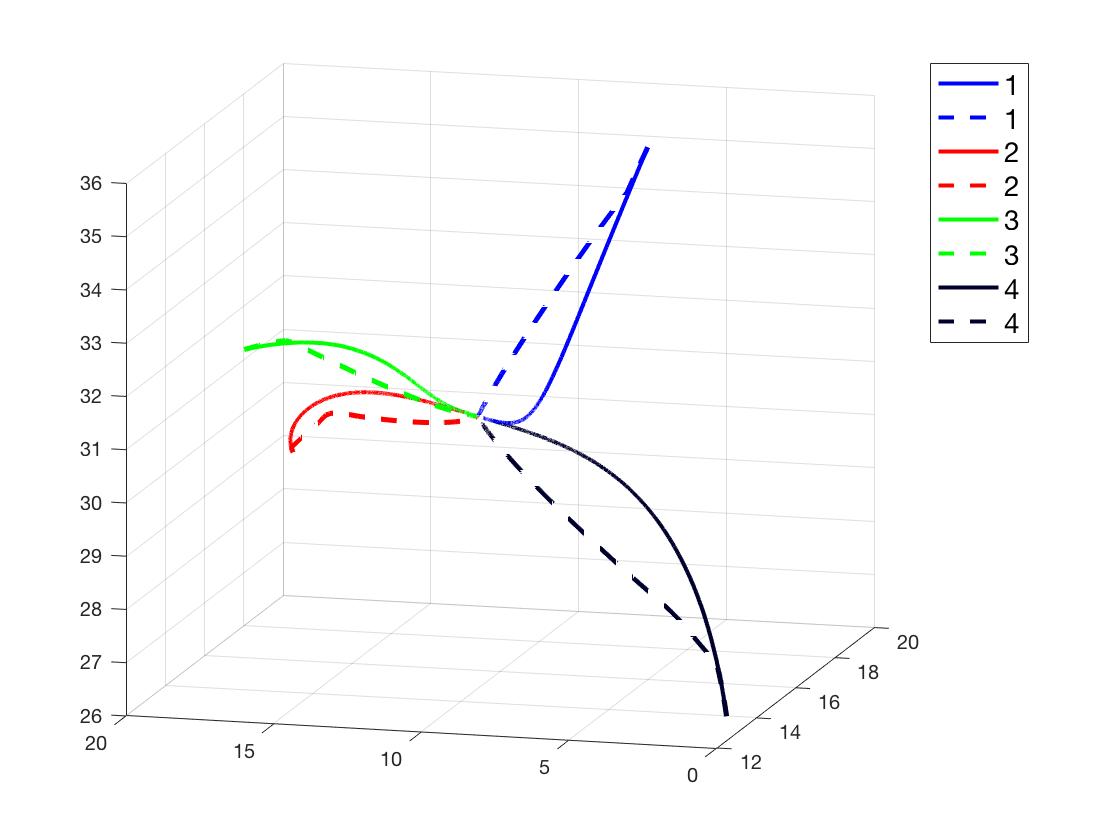}
\caption{Rendezvous missions comparing and unweighted network (solid curves) to a time-varying weighted one (dashed curves). They both agree on the consensus joint value. The points where the dashed curves diverge from the solid ones correspond to the addition of edges.}
\label{Fig-rendezvous3}
\end{figure}
In Figure \ref{Fig-rendezvous3-xyz} we compare the trajectories for agent 1 for each coordinate of motion. The eigenvalues for the unweighted network are given by $\{0,1,3,4\}$ and the eigenvalues for the weighted network are initially given by $\{0,6.234,19.385,37.655\}$, and near the rendezvous agreement are given by $\{0,0.190,0.196,0.207\}$.

\begin{figure}[h!]
\includegraphics[width=.325\linewidth]{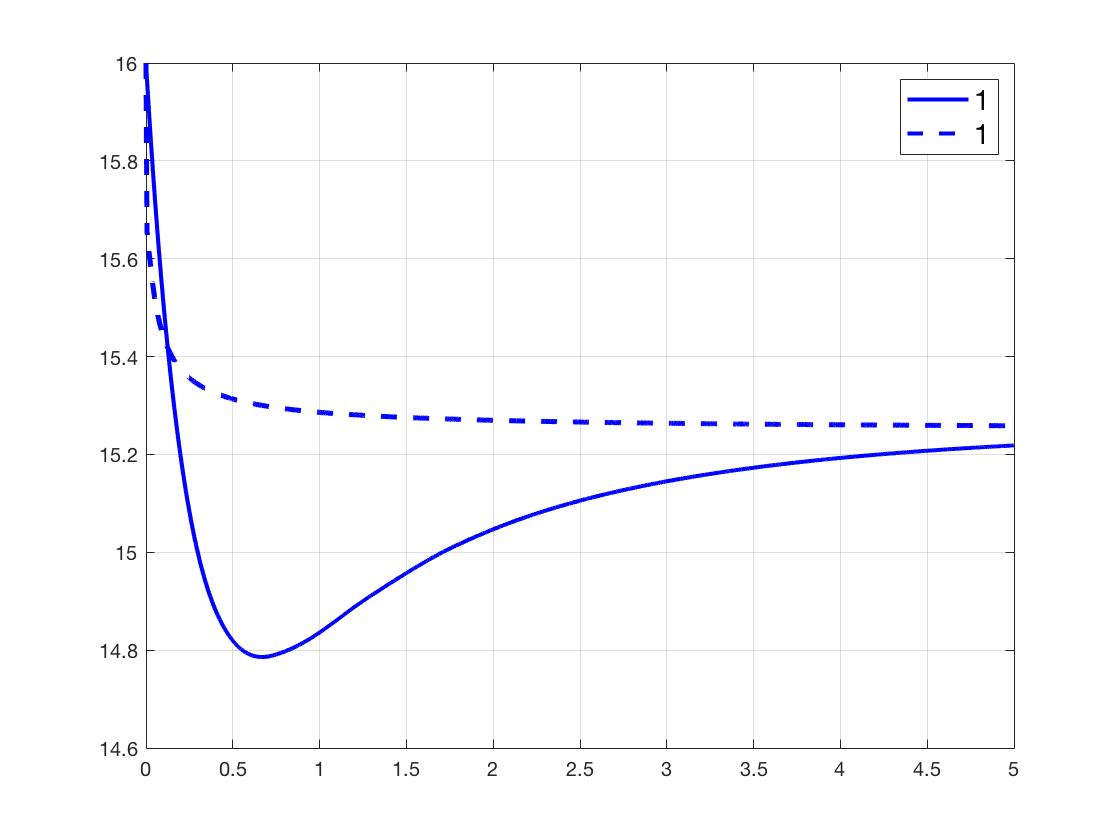}
\includegraphics[width=.325\linewidth]{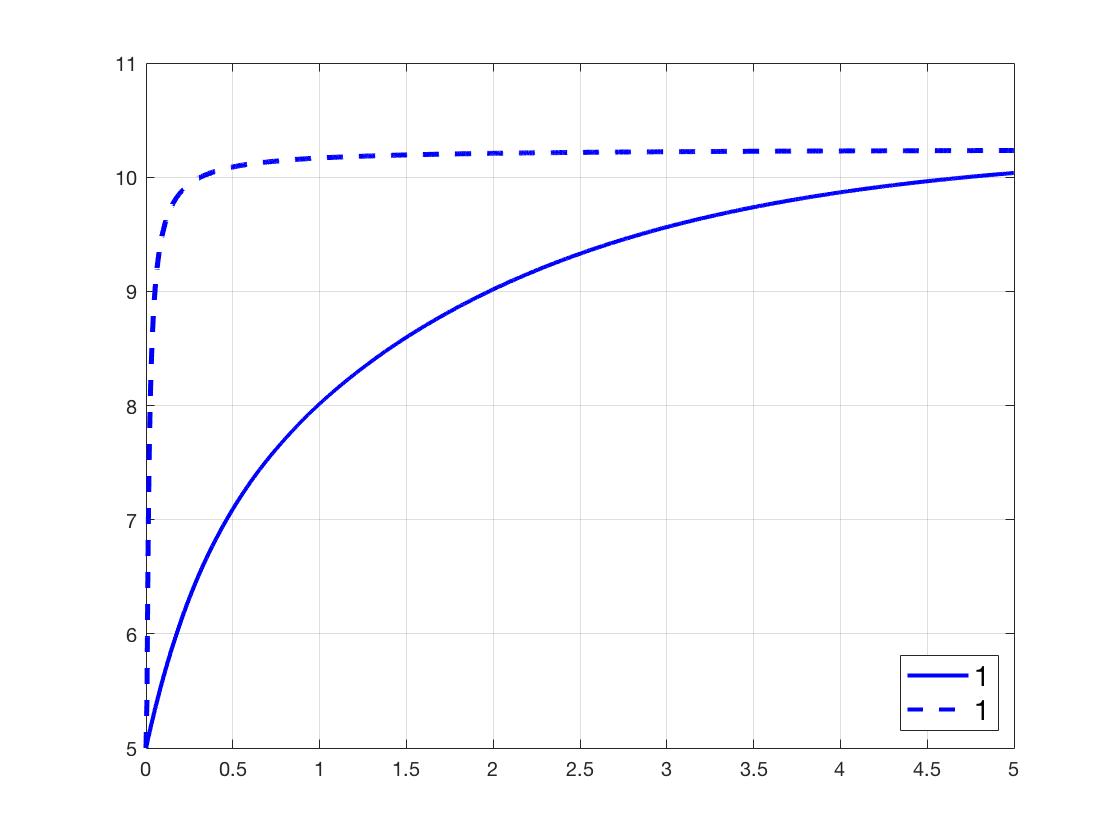}
\includegraphics[width=.325\linewidth]{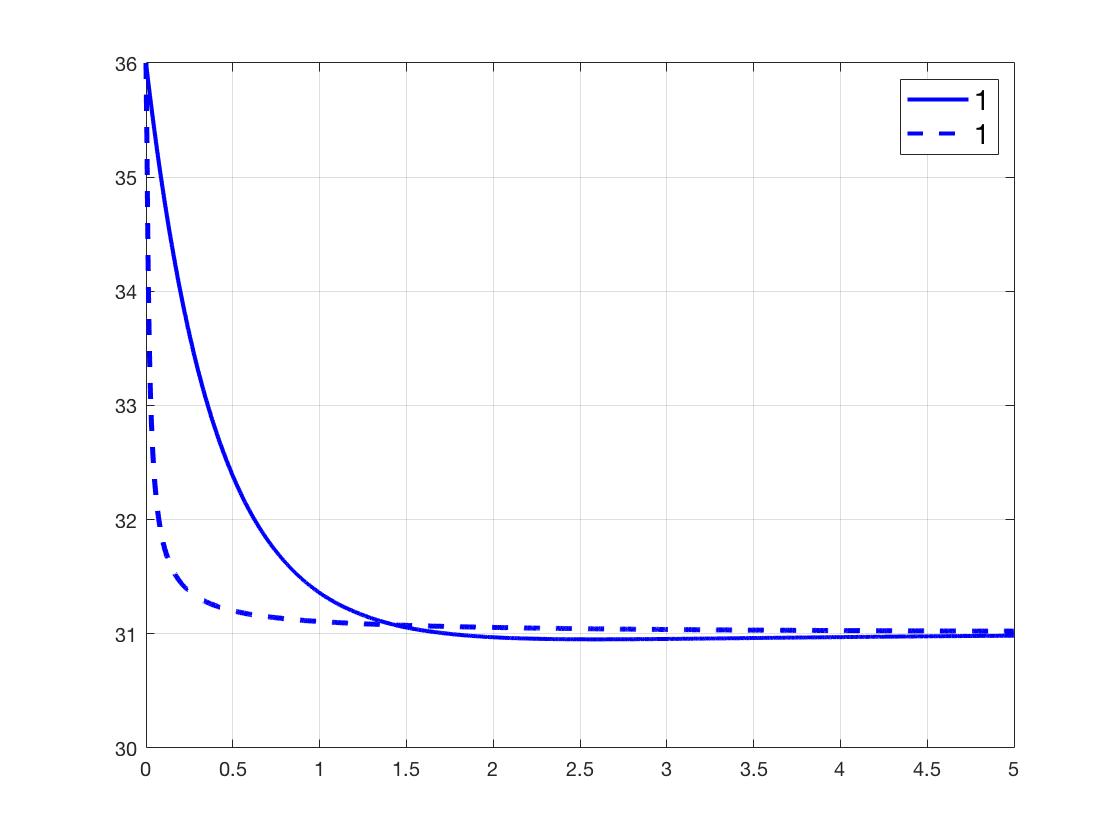}
\caption{Comparison of the $x,y,z$-motions for agent 1 for the rendezvous mission of Figure \ref{Fig-rendezvous3}. The components of the trajectory corresponding to the time-varying network clearly converge more rapidly.}
\label{Fig-rendezvous3-xyz}
\end{figure}

\newpage
\section{Dynamics of quadcopters}
\label{section-dynamics}

In order to successfully implement multi-agent motion planning, we need dynamically sound equations of motion governing the movement of each drone.  To this end, in this section we will derive such equations under certain realistic symmetry assumptions regarding the shape of the drone body.  Utilizing the geometric formalism of \cite{Bullo Lewis}, we present these equations in both first-order and second-order forms, and provide a differential geometric version in terms of the Levi-Civita connection on our configuration space equipped with a natural Riemannian metric.   Finally, we give the equations explicitly in coordinates.  Note that the notation in this section is independent from that in the previous section.

\subsection{Setup}
In this subsection we provide the basic physical notation and standard derivation of the equations of motion for a rigid body in the body-fixed frame.
A rigid body can model a number of vehicles in space or in a fluid, such as satellites or underwater vehicles, which can be stabilized by spinning rotors (\cite{Bloch1, Bloch3}).
Here, we assume our drone is nothing more than a rigid body in a fluid; later we will make certain additional assumptions explicit.  
We closely follow the derivations in \cite{DCDS, IJOC}.

The configuration space for a rigid body moving in a fluid (\cite{Bloch book}) is
$$Q=SE(3)\cong \mathbb R^3 \times SO(3).$$
Here we equip $Q$ with coordinates $(b,R)$ where $b\in \mathbb R^3$ represents the position of the center of mass of the body in space and $R\in \mathbb R^{3 \times 3}$ is a rotation matrix representing the orientation of the body aligned with the principal axes of inertia.  This coordinate system corresponds to some inertial frame of reference.

We regularly use the notation of the \textit{hat map}, the Lie algebra isomorphism
$$ \hat{} : (\mathbb R^3, \times) \to (\mathfrak{so}(3), [,])$$
given by $\hat{y}z = y \times z$, equivalently $\hat{y}=\begin{pmatrix}
0 & -y_3 & y_2 \\ y_3 & 0 & -y_1 \\ -y_2 & y_1 & 0
\end{pmatrix}$.

First, choose a non-inertial body-fixed frame 
and set the following notation:
\begin{align*}
v &= \text{translational velocity in body frame}\\
\Omega &= \text{angular velocity in body frame}\\
p &= \text{translational momentum in inertial frame}\\
\pi &= \text{angular momentum in inertial frame}\\
P &= \text{translational momentum in body frame}\\
\Pi &= \text{angular momentum in body frame}.
\end{align*}

We seek to derive the dynamic equations of motion for the system in the body frame.  Here this will take the form of a system of ODEs in the variables $(b, R, v, \Omega) \in TQ$.  Our kinematic equations are:
\begin{align}
\dot{b} &= Rv \label{b dot} \\
\dot{R} &= R\hat{\Omega} \label{R dot}.
\end{align}

We transform momenta between the inertial and body frames according to:
\begin{align}
p&=RP \label{transl mom} \\
\pi &= R\Pi + \hat{b} p \label{ang mom}
\end{align}

Begin by differentiating equations (\ref{transl mom}) and (\ref{ang mom}):
\begin{align}
\dot{p} &= \dot{R}P + R\dot{P} \label{little p dot} \\ 
\dot{\pi} &= \dot{R}\Pi + R \dot{\Pi} + \dot{b} \times p + b \times \dot{p}. \label{little pi dot}
\end{align}
To rewrite these equations in a single frame, we must introduce additional quantities.  In the inertial frame, let 
\begin{align*}
f_i &= \text{external forces}, \quad  i=1, \dots, k  \\
\tau_i &= \text{external torques}, \quad  i=1, \dots, l, 
\end{align*}
so that 
\begin{align}
E_F &= \sum_{i=1}^k R^tf_i \label{ext forces}= \text{total external force in body frame} \\
E_T &= \sum_{i=1}^l R^t\tau_i \label{ext torques}= \text{total external torque in body frame}.
\end{align}
Now in the inertial frame, the dynamics are solely due to external forces and torques:
\begin{align}
\dot{p} &= \sum_{i=1}^k f_i \label{p dot}\\
\dot{\pi} &= \sum_{i=1}^k \hat{x}_i f_i + \sum_{i=1}^l \tau_i, \label{pi dot}
\end{align}
where $x_i$ is the vector from inertial frame origin to line of action of $f_i$.
 
We then solve (\ref{little p dot}), (\ref{little pi dot}) for $\dot{P}$ and $\dot{\Pi}$ and simplify using equations (\ref{b dot}), (\ref{R dot}), (\ref{transl mom}), (\ref{ext forces}), (\ref{ext torques}), (\ref{p dot}), (\ref{pi dot}):  
\begin{align}
\dot{P} &= \hat{P}\Omega + E_F \label{P dot} \\
\dot{\Pi} &= \hat{\Pi}\Omega + \hat{P}v + E_T + R^t\sum_i(x_i-b) \times f_i. \label{Pi dot}
\end{align}
This gives the evolution of the momenta in the body frame, but the equations mix momenta and velocities.  In order to arrive at equations in terms of solely momenta or velocities, we must find explicit formulas relating the two.  This can be acheived by the Legendre transform
\begin{equation*}
P = \frac{\partial T}{\partial v}, \qquad
\Pi = \frac{\partial T}{\partial \Omega}.
\end{equation*}
To make this explicit, we introduce the kinetic energies
\begin{align}
T_{body} &= \frac{1}{2}\begin{pmatrix}
v \\ \Omega
\end{pmatrix}^t
\begin{pmatrix}
mI_3 & -m\hat{r}_{C_G} \\ m\hat{r}_{C_G} & J_b \end{pmatrix} 
\begin{pmatrix} v \\ \Omega \end{pmatrix} \label{kinetic body}  \\
T_{fluid} &= \frac{1}{2}\begin{pmatrix}
v \\ \Omega
\end{pmatrix}^t
\begin{pmatrix}
M_f & C_f^t \\ C_f & J_f \end{pmatrix} 
\begin{pmatrix} v \\ \Omega \end{pmatrix}  \label{kinetic fluid}\\
T &= T_{body} + T_{fluid} \nonumber \\
&= \frac{1}{2}\begin{pmatrix}
v \\ \Omega
\end{pmatrix}^t
\begin{pmatrix}
mI_3+M_f & -m\hat{r}_{C_G} +C_f^t\\ m\hat{r}_{C_G} +C_f & J_b +J_f \end{pmatrix} 
\begin{pmatrix} v \\ \Omega \end{pmatrix} \label{kinetic total} 
\end{align}
where
$m$ is the mass of the body, 
$r_{C_G}$ is vector from center of gravity to body frame origin, 
$J_b$ is the body inertia tensor, 
$J_f$ is the added mass inertia tensor,
$M_f$ is the added mass, 
$C_f$ is the added cross terms, and
$I_3$ is the $3 \times 3$ identity matrix.

Then by differentiating the total kinetic energy (\ref{kinetic total}), we find
\begin{align}
P=\frac{\partial T}{\partial v} &= (mI_3+M_f)v +(-m\hat{r}_{C_G} +C_f^t)\Omega  \label{transform transl}\\
\Pi=\frac{\partial T}{\partial \Omega} &= ( m\hat{r}_{C_G} +C_f)v + (J_b +J_f)\Omega.\label{transform ang}
\end{align}
These equations transform velocities into momenta.

We now assume the body has an especially nice shape, which is realistic for most drones.

\textbf{Assumption 1.}  Assume the body has three planes of symmetry and the principle axes of inertial coincide with the body frame axes.  This implies $J_b, J_f,$ and $M_f$ are all diagonal and $C_f =0$.
When we consider gravity in Section \ref{subsec: extrenal}, we also assume that this symmetry extends to the mass density of the body.

\textbf{Assumption 2.}  Assume the center of gravity of the body coincides with the origin of the body frame.  This implies $\hat{r}_{C_G} =0$.

With these two assumptions, equations (\ref{transform transl}) and (\ref{transform ang}) reduce to
\begin{align}
P &= Mv  \label{transform transl reduced}  \\
\Pi &= J\Omega,  \label{transform ang reduced} 
\end{align}
where 
$M=mI_3 +M_f$ is the total mass of the system and 
$J= J_b +J_f$ is the moment of inertia of the system.

Now differentiate (\ref{transform transl reduced}) and (\ref{transform ang reduced}) and simplify using (\ref{P dot}), (\ref{Pi dot}), (\ref{transform transl reduced}) and (\ref{transform ang reduced}) to obtain the derivatives of the velocities expressed solely in terms of positions and velocities, without momenta:
\begin{align}
m\dot{v} &= Mv \times \Omega + E_F \label{eqn of motion transl} \\
J\dot{\Omega} &= J\Omega \times \Omega + Mv \times v +  E_T + R^t\sum_i(x_i-b) \times f_i.\label{eqn of motion ang}
\end{align}

Finally, since for drones our fluid is air, we make the following simplifying assumption.

\textbf{Assumption 3.}  Assume the added fluid is negligible.  This implies $M_f=J_f=0$. Then $J=J_b$, and since $M=mI_3$ is a scalar matrix, the fictitious force $Mv \times v=0$.

Combining this with (\ref{b dot}), (\ref{R dot}), (\ref{eqn of motion transl}), (\ref{eqn of motion ang}), we obtain the complete equations of motion for a rigid drone in air as a first order system on $TQ$ coordinatized by $(b, R, v, \Omega)$:
\begin{align}
\dot{b} &= Rv \\
\dot{R} &= R\hat{\Omega} \\
m\dot{v} &= mv \times \Omega + E_F  \\
J\dot{\Omega} &= J\Omega \times \Omega + 
E_T + R^t\sum_{i=1}^k(x_i-b) \times f_i. \label{Omega dot}
\end{align}

\subsection{External forces and torques}\label{subsec: extrenal}
In this section we explicitly work out the external terms $E_F$ and $E_T$ for a drone in the body frame.  We have four rotors producing thrust in a quadcopter, with the $i$th rotor possessing  angular velocity  $\omega_i$ producing thrust $t_i=K_r\omega_i^2$, where $K_r$ is the thrust coefficient (\cite{Mueller}).  Here we are implicitly treating the $\omega_i$ as our controls.

For the external forces, we follow \cite{Stepanyan}.  
The force due to drag is 
$$f_1=-\text{diag}(v_1|v_1|,v_2|v_2|,v_3|v_3|)C_D,$$ 
where $v_j$ is the $j$th component of the linear velocity $v$ and $C_D$ is the vector of translational drag coefficients in the body frame.
The force due to gravity is 
$$f_2=-mgR^te_3^I,$$
where $g$ is the gravitational constant and  the third inertial frame basis vector $e_3^I$ represents the opposite direction of gravitational attraction.
Lastly, the force produced by thrust is $$f_3=\sum_{i=1}^4e_3^Bt_i,$$
where the third body frame basis vector $e_3^B$ represents the direction in which each of the motors produce thrust.
Thus $E_F=f_1+f_2+f_3$.  

Ostensibly, each of these forces induces a torque, represented by the last term $R^t\sum_{i=1}^k(x_i-b) \times f_i$ in (\ref{Omega dot}).  However, for $f_1$ and $f_2$, no torque is induced as the line of force passes through the center of mass of the body due to our symmetry assumptions.  
The force $f_3$, however, represents the sum of four individual forces located at each of the four rotors.  Each of these forces induces a torque, and their sum gives the net torque generated by the rotors:
$$\tau_f=\begin{pmatrix}K_rd(\omega_3^2-\omega_1^2)\\K_rd(\omega_4^2-\omega_2^2)\\K_d\sum_{i=1}^4(-1)^{i+1}\omega_i^2
    \end{pmatrix},$$
where 
$K_d$ is the propeller drag coefficient and $d$ is the distance from the drone center of mass to the rotation axis of each rotor.  

The external torques are computed as in \cite{Bouadi, Stepanyan}.
The torque in the body frame generated by drag is 
$$\tau_1=-\text{diag}(\Omega_1|\Omega_1|,\Omega_2|\Omega_2|,\Omega_3|\Omega_3|)C_\tau,$$ 
where $\Omega_j$ is the $j$th component of the angular velocity $\Omega$ and $C_{\tau}$ is the vector of rotational drag coefficients in the body frame.
The torque due to gyroscopic effects is 
$$\tau_2=\sum_{i=1}^4\Omega\times (-1)^{i+1}J_r(0,0,\omega_i)^t,$$
where $J_r$ is the moment of inertia for a rotor.  Assuming the rotor has nontrivial inertia about only the vertical axis, $J_r$ has only one nonzero entry which we denote by $\bar{J_r}$, so that $J_r = \text{diag}(0, 0, \bar{J_r})$.
We then have $E_T=\tau_1  + \tau_2$.

We can now give our updated equations of motion for a quadcopter:
\begin{align}
\dot{b} &= Rv \label{b dot again} \\ 
\dot{R} &= R\hat{\Omega}  \label{R dot again} \\
m\dot{v} &= mv \times \Omega + f_1 +f_2+f_3  \label{v dot again} \\
J\dot{\Omega} &= J\Omega \times \Omega + \tau_f + \tau_1 + \tau_2. \label{Omega dot again}
\end{align}

\subsection{Coordinate expression}
\label{subsec: coord expr}
Here we work in coordinates $\eta=(b_1, b_2, b_3, \phi, \theta, \psi)$ on $Q=SE(3)$ representing the position and orientation of the drone in the inertial frame.  The positions $b_1, b_2, b_3$ are the standard coordinates for $\mathbb R^3$.
The angles $ \phi, \theta, \psi$ are Tait-Bryan angles, known in aeronautics as \textit{roll, pitch}, and \textit{yaw} respectively, and sometimes referred to as \textit{Euler angles} (although not the proper or classical kind).  The rotation angles are given intrinsically by the sequence $z-y'-x''$  or extrinsically by the sequence $x-y-z$.  
Here we take $(\phi, \theta, \psi) \in (-\pi, \pi)\times (-\pi/2, \pi/2) \times (-\pi, \pi) .$  These coordinates on $Q$ induce natural coordinates for the velocities $(v,\Omega)=(v_1, v_2, v_3,\Omega_1, \Omega_2, \Omega_3)$.

Now let
$$
R(\eta) = \begin{pmatrix}
        C_\psi C_\theta & C_\psi S_\theta S_\phi - S_\psi C_\phi & C_\psi S_\theta C_\phi + S_\psi S_\phi\\
        S_\psi C_\theta & S_\psi S_\theta S_\phi + C_\psi C_\phi & S_\psi S_\theta C_\phi - C_\psi S_\phi\\
        -S_\theta & C_\theta S_\phi & C_\theta C_\phi
    \end{pmatrix},
    $$
and
    $$\Theta(\eta)=\begin{pmatrix}1&S_\phi T_\theta&C_\phi T_\theta\\0&C_\phi&-S_\phi\\0&\frac{S_\phi}{C_\theta}&\frac{C_\phi}{C_\theta}\end{pmatrix},$$
    where $C_x=\cos(x),$ $S_x=\sin(x)$ and $T_x=\tan(x)$.
We can write $R$ as the composition of three pure rotations as follows.  Let $R_{i, \alpha}$ denote rotation by $\alpha$ radians about the $i$th principal axis of inertia.
Explicitly,  
\begin{align*}
&R_{1, \phi} = 
\begin{pmatrix}
        1 & 0 & 0 \\
        0 &  C_\phi & -S_\phi\\
        0 & S_\phi & C_\phi 
    \end{pmatrix},
\\
    &R_{2, \theta} = 
\begin{pmatrix}
        C_\theta & 0 & S_\theta \\
        0 & 1 & 0 \\
        -S_\theta & 0 & C_\theta
    \end{pmatrix},
\\
    &R_{3, \psi} = 
\begin{pmatrix}
        C_\psi & -S_\psi & 0 \\
        S_\psi & C_\psi & 0 \\
        0 & 0 & 1
    \end{pmatrix}.
\end{align*}
Then $R=   R_{3, \psi}    R_{2, \theta}  R_{1, \phi} $.

Now $R$ transforms linear velocity in the body frame to linear velocity in the inertial frame, and
$\Theta$ transforms the body-fixed angular velocity $(\Omega_1, \Omega_2, \Omega_3)$ into the Euler rate vector $(\dot{\phi}, \dot{\theta}, \dot{\psi})$.
Thus our kinematic equations (\ref{b dot again}) and (\ref{R dot again}) take the form
 $$\dot{\eta}=\begin{pmatrix}R(\eta)&0\\0&\Theta(\eta)\end{pmatrix}\begin{pmatrix}v\\\Omega\end{pmatrix}.$$
Substituting $R$ into equations (\ref{v dot again}) and (\ref{Omega dot again}) and expanding yields the following.
 
 \begin{lemma} \label{lemma: eqns in coords}
 In coordinates developed above, the equations of motion for a quadcopter take the form
 \begin{align}
 \dot{b}_1&=v_1C_\psi C_\theta+v_2(C_\psi S_\theta S_\phi - S_\psi C_\phi) +v_3(C_\psi S_\theta C_\phi + S_\psi S_\phi)\\
        \dot{b}_2&=v_1S_\psi C_\theta+v_2(S_\psi S_\theta S_\phi + C_\psi C_\phi)+v_3(S_\psi S_\theta C_\phi - C_\psi S_\phi)\\
        \dot{b}_3&=-v_1S_\theta +v_2C_\theta S_\phi +v_3C_\theta C_\phi\\
        \dot{\phi}&=\Omega_1+\Omega_2S_\phi T_\theta +\Omega_3C_\phi T_\theta\\
        \dot{\theta}&=\Omega_2C_\phi-\Omega_3S_\phi\\
        \dot{\psi}&=\Omega_2\frac{S_\phi}{C_\theta}+\Omega_3\frac{C_\phi}{C_\theta} \\        
 \dot{v}_1&=v_2\Omega_3-v_3\Omega_2-\frac{1}{m}v_1|v_1|C_{D_1}+gS_\theta\\
        \dot{v}_2&=v_3\Omega_1-v_1\Omega_3-\frac{1}{m}v_2|v_2|C_{D_2}-gC_\theta S_\phi\\
        \dot{v}_3&=v_1\Omega_2-v_2\Omega_1+\frac{1}{m}\Bigg(\sum_{i=1}^4 t_i-v_3|v_3|C_{D_3}\Bigg)-gC_\theta C_\phi \\        
  \dot{\Omega}_1&=\frac{1}{J_{1}}\left[(J_{2}-J_{3})\Omega_2\Omega_3+\bar J_r\sum_{i=1}^4(-1)^{i+1}\Omega_2\omega_i
  +K_rd(\omega_3^2-\omega_1^2)-\Omega_1|\Omega_1|C_{\tau_1}\right]\\
        \dot{\Omega}_2&=\frac{1}{J_{2}}\left[(J_{3}-J_{1})\Omega_1\Omega_3-\bar J_r\sum_{i=1}^4(-1)^{i+1}\Omega_1\omega_i
        +K_rd(\omega_4^2-\omega_2^2)-\Omega_2|\Omega_2|C_{\tau_2}\right]\\
 \dot{\Omega}_3&=\frac{1}{J_{3}}\left[(J_{1}-J_{2})\Omega_1\Omega_2+K_d\sum_{i=1}^4(-1)^{i+1}\omega_i^2-\Omega_3|\Omega_3|C_{\tau_3}\right].        
 \end{align}
 \end{lemma}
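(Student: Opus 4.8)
The plan is to verify that this lemma is essentially a bookkeeping computation: I must substitute the explicit coordinate expressions for $R$, $\Theta$, and the various forces and torques into the coordinate-free equations (\ref{b dot again})--(\ref{Omega dot again}) and expand. No genuinely new mathematical content is involved; the work is to confirm that each of the twelve scalar equations is the correct component of a known vector equation. I would organize the proof into four blocks, matching the four vector equations, and handle them in increasing order of computational delicacy.

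First, the kinematic translational block $\dot b = Rv$ is immediate: I would simply read off the three rows of the product $R(\eta)v$ using the stated matrix $R(\eta)$, which gives the first three equations verbatim. Next, for the kinematic rotational block, I would use the relation $\dot\eta_{\text{ang}} = \Theta(\eta)\Omega$, where $\Theta$ is the stated matrix relating the body angular velocity $(\Omega_1,\Omega_2,\Omega_3)$ to the Euler rates $(\dot\phi,\dot\theta,\dot\psi)$; multiplying $\Theta(\eta)$ by the column $\Omega$ yields the three equations for $\dot\phi,\dot\theta,\dot\psi$ directly. The only point worth noting here is that this presupposes the identity $\dot R = R\hat\Omega$ is equivalent to $\dot\eta_{\text{ang}} = \Theta\Omega$ under the chosen $z$-$y'$-$x''$ convention; I would remark that $\Theta$ is precisely the inverse of the matrix sending Euler rates to body angular velocity, a standard fact for Tait--Bryan angles that I would cite or state without reproving.

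For the translational dynamics block, equation (\ref{v dot again}) reads $m\dot v = mv\times\Omega + f_1+f_2+f_3$. I would expand componentwise: the cross product $v\times\Omega$ supplies the bilinear terms $v_2\Omega_3-v_3\Omega_2$ and its cyclic analogues; the drag force $f_1=-\text{diag}(v_1|v_1|,\dots)C_D$ contributes the $-\tfrac1m v_j|v_j|C_{D_j}$ terms; the thrust $f_3=\sum_i e_3^B t_i$ contributes $\tfrac1m\sum_i t_i$ only in the third component since thrust acts along the body $z$-axis; and the gravity term $f_2=-mgR^t e_3^I$ requires extracting the third column of $R$ (equivalently the third row of $R^t$), which produces exactly $gS_\theta$, $-gC_\theta S_\phi$, and $-gC_\theta C_\phi$. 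I would flag the gravity term as the one place demanding care, since it is where the coordinate form of $R$ enters nontrivially.

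Finally, for the rotational dynamics block, equation (\ref{Omega dot again}) gives $J\dot\Omega = J\Omega\times\Omega + \tau_f+\tau_1+\tau_2$. Using $J=\text{diag}(J_1,J_2,J_3)$, the term $J\Omega\times\Omega$ expands to the familiar Euler rigid-body terms $(J_2-J_3)\Omega_2\Omega_3$ and cyclic permutations after dividing by the appropriate $J_k$; the gyroscopic torque $\tau_2$ supplies the $\bar J_r\sum_i(-1)^{i+1}\Omega_j\omega_i$ contributions, the rotor torque $\tau_f$ supplies the $K_rd(\omega_3^2-\omega_1^2)$-type and $K_d$ terms, and $\tau_1$ supplies the rotational drag $-\Omega_j|\Omega_j|C_{\tau_j}$. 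The main obstacle, and the only step requiring real attention, is tracking the signs and indices in $\tau_f$ and $\tau_2$: I must confirm that the cross product $\Omega\times(-1)^{i+1}J_r(0,0,\omega_i)^t$ produces precisely a $+\bar J_r\Omega_2\omega_i$ term in the first component and a $-\bar J_r\Omega_1\omega_i$ term in the second (with no gyroscopic contribution to the third, consistent with $J_r$ being supported on the vertical axis), and that the alternating signs $(-1)^{i+1}$ match the stated expressions. Once these sign conventions are pinned down, the remaining expansions are routine, and collecting terms yields the twelve equations exactly as stated.
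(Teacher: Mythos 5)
Your plan is correct and coincides with the paper's treatment: the paper offers no written proof of Lemma \ref{lemma: eqns in coords}, saying only that substituting $R$ and $\Theta$ into equations (\ref{b dot again})--(\ref{Omega dot again}) and expanding componentwise yields the result, which is exactly your four-block computation. One trivial slip: the gravity term $f_2=-mgR^te_3^I$ picks out the third \emph{row} of $R$ (i.e.\ the third column of $R^t$), not the third column of $R$, though the components you state, $gS_\theta$, $-gC_\theta S_\phi$, $-gC_\theta C_\phi$, are the correct ones.
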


\subsection{Geometric formulation}
Here we will equip $Q$ with a natural metric allowing us to characterize the equations of motion as a second order system on $Q$.  We then lift the dynamics to a first order system on $TQ$.  

Under our assumptions, the kinetic energy (\ref{kinetic total}) can be encoded as the matrix
$$ \mathbb G = 
\begin{pmatrix}
mI_3 & 0 \\ 0 & J
\end{pmatrix}$$
where $m$ and $J$ denote the mass and inertia tensor of the drone, respectively.
This is a (diagonal) positive definite symmetric matrix and induces a Riemannian metric on $Q$, which in turn yields the Levi-Civita connection $\nabla$.  This allows us to define the \textit{acceleration} of a curve in $Q$ as $\nabla_{\dot{\gamma}} \dot{\gamma}$, where a curve $\gamma(t) = (b(t), R(t))$ has velocity $\dot{\gamma} = (b(t), R(t), v(t), \Omega(t))$.  Explicitly, we compute
$$\nabla_{\dot{\gamma}} \dot{\gamma} = 
\begin{pmatrix}
\dot{v} + \Omega \times v \\ \dot \Omega + J^{-1}(\Omega \times J\Omega)
\end{pmatrix}.
$$
This connection evidently contains the fictitious forces, and is independent of choice of coordinates.  The geodesics for $\nabla$ (curves with zero acceleration) represent motions of a drone subject to no external forces.

Now in general, a Riemannian metric $\mathbb G$ on a manifold $Q$ induces the ``musical isomorphisms"  $\mathbb G^{\sharp}: T^*Q \to TQ$ and $\mathbb G^{\flat}: TQ \to T^*Q$.
Newton's equation can be expressed geometrically as $F(\gamma) = \mathbb G^{\flat} \nabla_{\dot{\gamma}} \dot{\gamma}$ where $F$ is some force.  In our setting, this produces the geometric version of our equations of motion:
\begin{equation}
 \nabla_{\dot{\gamma}} \dot{\gamma} =   \mathbb G^{\sharp}F(\gamma) \label{geometric eqns}
\end{equation}
where
$$ \mathbb G^{\sharp} = 
\begin{pmatrix}
m^{-1}I_3 & 0 \\ 0 & J^{-1}
\end{pmatrix}$$
and 
$$ F(b, R) = 
\begin{pmatrix}
f_1 +f_2 +f_3  \\  \tau_f + \tau_1 + \tau_2
\end{pmatrix}.$$

Equation (\ref{geometric eqns}) represents a second order system on $Q$.  We now express this as a first order system on $TQ$.  To that end, we invoke the \textit{vertical lift} $\text{vlft}_w: T_qQ \to T_wTQ$ where $w \in T_qQ$,  and the \textit{geodesic spray} $S:TQ \to TTQ$ (\cite{LewisMurray, DCDS}).
For our system, we can explicitly compute
$$S(b,R,v,\Omega)=\begin{pmatrix}v\\\Omega\\v\times\Omega\\J^{-1}(J\Omega\times\Omega)\end{pmatrix}.$$
Here the components are expressed relative to the standard left-invariant basis of vector fields $\{ Y_k\}$ on $TQ=TSE(3)$, where at a point $(b, R)\in SE(3)$ we have $Y_i=(Re_i,0)$ for $i=1, 2, 3$ and $Y_{j+3}=(0, R\hat{e}_j)$ for $j=1, 2, 3$.

Then our equations of motion can be expressed by the first order system
\begin{equation}\label{Gamma dot}
\dot \Gamma = X(\Gamma)
\end{equation}
where $\Gamma$ is a curve in $TQ$ and $X$ is the vector field on $TQ$ given by 
\begin{equation}
X = S + \text{vlft} (\mathbb G^\sharp F).
\end{equation}

Next, we express this system as an affine nonlinear control system as in \cite{Bloch book}.  
We can rewrite equation (\ref{Gamma dot}) as
\begin{equation}\label{Gamma dot again}
    \dot\Gamma = 
    \begin{pmatrix}
    v\\\Omega\\v\times\Omega +m^{-1}f_1+m^{-1}f_2\\J^{-1}(J\Omega\times\Omega)+J^{-1}\tau_1
    \end{pmatrix}+
    \begin{pmatrix}
    0\\0\\ m^{-1}f_3\\J^{-1}\tau_f +J^{-1}\tau_2    \end{pmatrix},
\end{equation}
where the first summand is the drift, and the second summand contains all the controls $\omega_i$.
Explicitly, we have
\begin{align*}
    m^{-1}f_3&= m^{-1}\sum_{i=1}^4K_r\omega^2_i e_3^B \\
    J^{-1}\tau_f +J^{-1}\tau_2&= J^{-1}\begin{pmatrix}K_rd(\omega_3^2-\omega_1^2)\\K_rd(\omega_4^2-\omega_2^2)\\K_d\sum_{i=1}^4(-1)^{i+1}\omega_i^2 \end{pmatrix}
    +J^{-1}\sum_{i=1}^4\Omega\times (-1)^{i+1}J_r
    \begin{pmatrix}
    0 \\ 0 \\ \omega_i
    \end{pmatrix}.
\end{align*}

Note that the torque due to gyroscopic effects is linear in $\omega_i$, while the force and torque due to thrust are linear in $\omega_i^2$.  Consequently we cannot write this system as an affine control system in independent controls.  As is common in the literature (e.g. \cite{Mueller, Stepanyan}) we now simplify by ignoring the gyroscopic effects in $\tau_2$.  This simplification does not affect the simple motion planning in Section \ref{subsec: planning}, but the gyroscopic effects are present in the simulations of Section \ref{subsec: simulations}.
Consider the controls
$$u_i=\omega_i^2 \quad \text{for} \quad i=1,2,3,4. 
$$

\begin{proposition}\label{prop: control system}
Ignoring gyroscopic effects, the quadcopter can be expressed as an affine nonlinear control system as:
\begin{equation}
\dot{\Gamma}=f(\Gamma) + \sum_{i=1}^4g_i(\Gamma)u_i
\end{equation}
where the drift vector field is 
$$f(\Gamma)= \begin{pmatrix}
    v\\\Omega\\v\times\Omega +m^{-1}f_1+m^{-1}f_2\\J^{-1}(J\Omega\times\Omega)+J^{-1}\tau_1
    \end{pmatrix}
$$
and the control vector fields are 
\begin{eqnarray*}
    g_1=\begin{pmatrix}
    0\\ 0\\ (0, 0,m^{-1}K_r)^t \\ (-K_rd, 0, K_d)^t
    \end{pmatrix}, \quad
    g_2=\begin{pmatrix}
    0\\ 0\\ (0, 0,m^{-1}K_r)^t \\ (0, -K_rd, -K_d)^t
    \end{pmatrix}, \\
    g_3=\begin{pmatrix}
    0\\ 0\\ (0, 0,m^{-1}K_r)^t \\ (K_rd, 0, K_d)^t
    \end{pmatrix}, \quad
    g_4=\begin{pmatrix}
    0\\ 0\\ (0, 0,m^{-1}K_r)^t \\ (0, K_rd, -K_d)^t
    \end{pmatrix}.
\end{eqnarray*}
\end{proposition}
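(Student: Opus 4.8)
The plan is to prove this by direct verification, starting from the first-order system already assembled in equation (\ref{Gamma dot again}) and performing the change of control variables $u_i = \omega_i^2$. The content of the proposition is essentially that, once gyroscopic effects are discarded, every remaining appearance of the controls enters \emph{linearly} in the variables $u_i$, so that the control-dependent summand of (\ref{Gamma dot again}) splits as $\sum_{i=1}^4 g_i(\Gamma)\,u_i$ with state-independent coefficient vectors $g_i$, while the control-free summand is left untouched as the drift $f(\Gamma)$.

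First I would isolate the control summand of (\ref{Gamma dot again}), namely $(0,\,0,\,m^{-1}f_3,\,J^{-1}\tau_f + J^{-1}\tau_2)^t$, and delete the $J^{-1}\tau_2$ term in accordance with the stated simplification. Next I would substitute the explicit expressions for $f_3$ and $\tau_f$ recorded just above the proposition. For the force block this gives $m^{-1}f_3 = m^{-1}K_r\big(\sum_{i=1}^4 \omega_i^2\big)e_3^B$, so with $u_i = \omega_i^2$ and $e_3^B = (0,0,1)^t$ the force block of every $g_i$ is the common vector $m^{-1}K_r(0,0,1)^t = (0,0,m^{-1}K_r)^t$. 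For the torque block I would expand $J^{-1}\tau_f$ componentwise, rewriting each $\omega_i^2$ as $u_i$ and resolving the alternating sign $(-1)^{i+1}$ in the third entry explicitly as $+,-,+,-$ for $i=1,2,3,4$.

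The one genuine task is the bookkeeping: collecting, for each fixed $i$, the total coefficient of $u_i$ across both blocks. The first torque component $K_rd(u_3-u_1)$ and the second $K_rd(u_4-u_2)$ show that $u_1,u_3$ contribute only through the first slot and $u_2,u_4$ only through the second, and combining these with the third-component contributions $\pm K_d$ yields the four torque vectors $(-K_rd,0,K_d)^t$, $(0,-K_rd,-K_d)^t$, $(K_rd,0,K_d)^t$, $(0,K_rd,-K_d)^t$ attached respectively to $u_1,\dots,u_4$, exactly as claimed. I do not anticipate a serious obstacle here; the only conceptual point — and the reason the gyroscopic term must be dropped rather than retained — is that $\tau_2$ is linear in $\omega_i$ rather than in $\omega_i^2$, so it cannot be absorbed into the $u_i$ and would otherwise destroy affineness in the chosen controls.
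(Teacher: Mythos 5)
Your approach is exactly the paper's: the proposition carries no separate proof and is simply read off from equation (\ref{Gamma dot again}) by discarding $J^{-1}\tau_2$, substituting the displayed formulas for $f_3$ and $\tau_f$, and collecting coefficients of $u_i=\omega_i^2$; your sign bookkeeping and the observation that $u_1,u_3$ enter only the first torque slot and $u_2,u_4$ only the second are correct, as is your identification of the linearity of $\tau_2$ in $\omega_i$ as the sole reason it must be dropped. One caveat: you say you would expand $J^{-1}\tau_f$ componentwise, and a faithful expansion gives torque blocks $J^{-1}(-K_rd,0,K_d)^t$, $J^{-1}(0,-K_rd,-K_d)^t$, and so on, i.e., with entries divided by the diagonal inertias $J_1,J_2,J_3$ (compare the $1/J_k$ prefactors in Lemma \ref{lemma: eqns in coords}). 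The $g_i$ as printed in the proposition keep the $m^{-1}$ in the force block but omit the corresponding $J^{-1}$ in the torque block, so your assertion that the computation lands on the stated vectors ``exactly as claimed'' is not quite accurate --- a careful execution of your own plan would surface this missing factor rather than confirm the printed formula verbatim.
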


\section{Quadcopter multi-agent dynamics}
\label{section-quadtrajectories}

\subsection{Single-agent motion planning}
\label{subsec: planning}
We first explore some simple motions for a single quadcopter.  We use the coordinates and notation from Section \ref{section-dynamics}.

 \begin{lemma}\label{lemma: yaw motion}
Suppose a quadcopter undergoes yaw only, with no translational or other rotational motion.  Then
\begin{equation*}
    \omega_1=\omega_3=\sqrt{\frac{mg}{2K_r}-\omega_2^2} \qquad \text{and} \qquad \omega_4=\omega_2,
\end{equation*}
where $\omega_2$ is a free parameter.
\end{lemma}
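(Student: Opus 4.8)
The plan is to impose the kinematic meaning of ``pure yaw'' directly on the twelve coordinate equations of motion of Lemma \ref{lemma: eqns in coords} and read off the algebraic constraints on the $\omega_i$ that they force. Pure yaw means the drone holds its position and stays level while only its heading $\psi$ evolves; in coordinates this is the requirement $\dot b_1 = \dot b_2 = \dot b_3 = 0$ together with $\phi \equiv \theta \equiv 0$ (hence $\dot\phi = \dot\theta = 0$), leaving $\dot\psi$ as the only permitted nonzero rate. Physically, the only body force available to trim the motion is the vertical thrust along $e_3^B$, so holding position with zero drag forces the body $z$-axis vertical, which is exactly the level condition $\phi=\theta=0$.

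First I would extract the velocity and body-angular-velocity consequences. Since $R(\eta)$ is invertible, $\dot b = Rv = 0$ forces $v=0$, and maintaining this requires $\dot v = 0$. Setting $\phi=\theta=0$ (so $C_\phi=C_\theta=1$ and $S_\phi=S_\theta=T_\theta=0$) collapses the three Euler-rate equations to $\dot\phi=\Omega_1$, $\dot\theta=\Omega_2$, $\dot\psi=\Omega_3$; the no-roll and no-pitch conditions then give $\Omega_1=\Omega_2=0$ and identify the yaw rate as $\dot\psi=\Omega_3$.

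Next I would substitute $v=0$, $\Omega_1=\Omega_2=0$, $\phi=\theta=0$ into the three $\dot v_i$ equations. The first two vanish identically, while the third reduces to $\dot v_3=\frac1m\sum_{i=1}^4 t_i-g$; imposing $\dot v_3=0$ with $t_i=K_r\omega_i^2$ yields the hover relation $\sum_{i=1}^4\omega_i^2=mg/K_r$. Substituting the same values into the three $\dot\Omega_i$ equations, the inertia-coupling, gyroscopic (proportional to $\Omega_1$ or $\Omega_2$), and drag terms all vanish, so demanding $\dot\Omega_1=\dot\Omega_2=0$ leaves only $K_rd(\omega_3^2-\omega_1^2)=0$ and $K_rd(\omega_4^2-\omega_2^2)=0$, giving $\omega_1=\omega_3$ and $\omega_2=\omega_4$ for the nonnegative roots. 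The $\dot\Omega_3$ equation is deliberately left unconstrained: it is the term that drives the yaw and need not vanish.

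Finally I would combine the three constraints: writing $\omega_1=\omega_3$ and $\omega_4=\omega_2$ in the hover relation gives $2\omega_1^2+2\omega_2^2=mg/K_r$, hence $\omega_1^2=\frac{mg}{2K_r}-\omega_2^2$, which is the claimed formula, with $\omega_2$ surviving as the free parameter that sets the available yaw authority. I expect the only genuine subtlety to be a consistency check rather than a calculation: one must confirm that the imposed conditions $\phi=\theta=0$, $v=0$, $\Omega_1=\Omega_2=0$ are actually invariant under the flow, which is precisely what the simultaneous vanishing of $\dot\phi,\dot\theta,\dot v,\dot\Omega_1,\dot\Omega_2$ established above guarantees, while the single surviving degree of freedom in $\dot\Omega_3$ is what makes the yaw motion possible and explains why $\omega_2$ remains free.
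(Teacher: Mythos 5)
Your proposal is correct and follows essentially the same route as the paper: impose the pure-yaw conditions on the coordinate equations of Lemma \ref{lemma: eqns in coords}, reduce $\dot v_3=0$ to the hover relation $\sum_{i=1}^4 t_i = mg$, and solve with $t_i=K_r\omega_i^2$. The one small improvement is that you \emph{derive} $\omega_1=\omega_3$ and $\omega_2=\omega_4$ from the vanishing of the roll and pitch torque balances $K_rd(\omega_3^2-\omega_1^2)=K_rd(\omega_4^2-\omega_2^2)=0$, and you check invariance of the constraint set under the flow, whereas the paper simply asserts these pairings as part of the desired motion.
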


\begin{proof}
The desired motion keeps all variables constant except $\Omega_3$ and $\psi$.  This imposes $\omega_1=\omega_3$ and $\omega_2=\omega_4$ as well as 
$$\dot{b}= v_1 = v_2 = \theta = \phi = \Omega_1 = \Omega_2=0
$$
for the duration of the motion. 

The equations of motion in Lemma \ref{lemma: eqns in coords} simplify considerably; in particular, we have $\dot{b}_3=v_3$ and $\dot{v}_3=\frac{1}{m}\sum_{i=1}^4 t_i-g$.  Assuming $\dot{b}_3=0$ gives the expected result for hovering: $\sum_{i=1}^4 t_i=mg$.  Using $\omega_1=\omega_3$ and $\omega_2=\omega_4$ with $t_i=K_r\omega_i^2$ allows us to solve for 
$\omega_1=\sqrt{\frac{mg}{2K_r}-\omega_2^2}.$

\end{proof}

\begin{lemma}\label{lemma: x-axis motion}
Suppose a quadcopter moves only in the direction of the first body axis.  Then 
\begin{equation*}
    \omega_2=\frac{1}{3}\left(\sqrt{\frac{6a-8K_r\omega_4^2}{K_r}}-\omega_4\right) \qquad \text{and} \qquad \omega_1=\omega_3=\frac{1}{2}(\omega_2+\omega_4),
\end{equation*}
where 
$$a=m\left(\frac{1}{C_\theta}(\dot{v}_1S_\theta+v_1C_\theta\Omega_2+v_3S_\theta\Omega_2)-v_1\Omega_2+gC_\theta\right)+v_3|v_3|C_{D_3}
$$
and $\omega_4$ is a free parameter.  
\end{lemma}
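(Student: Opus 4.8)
The plan is to translate the phrase ``moves only in the direction of the first body axis'' into constraints on the state variables, substitute them into the coordinate equations of motion of Lemma \ref{lemma: eqns in coords}, and then read the rotor speeds off the resulting force and torque balances. Concretely, I would interpret the motion as translation along a fixed horizontal direction at constant altitude, with no sideslip, roll, or yaw: $\phi=\psi=0$, $\Omega_1=\Omega_3=0$, $v_2=0$, while the pitch $\theta$ and the pitch rate $\Omega_2$ are left free so that the drone can tilt in order to translate. Imposing $\dot b_3=0$ and using $\phi=\psi=0$, $v_2=0$ in the $\dot b_3$ equation gives $-v_1 S_\theta+v_3 C_\theta=0$, i.e. $v_3=v_1 T_\theta$, so that $v_3\neq 0$ precisely when the drone is pitched, which explains why $v_3$ appears in $a$. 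With these substitutions the remaining kinematic equations collapse to forward motion in the vertical plane.

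Next I would extract the two rotor relations from the attitude equations. With $\Omega_1=\Omega_3=0$, the $\dot\Omega_1$ equation retains exactly two contributions: the thrust differential $K_rd(\omega_3^2-\omega_1^2)$, which is independent of $\Omega_2$, and the gyroscopic coupling $\bar J_r\Omega_2(\omega_1-\omega_2+\omega_3-\omega_4)$, which is linear in $\Omega_2$. Since $\Omega_2$ is genuinely nonzero (the pitch is changing) and we want $\dot\Omega_1=0$ to persist, the $\Omega_2$-independent and $\Omega_2$-proportional parts must vanish separately, forcing $\omega_1=\omega_3$ and $\omega_1-\omega_2+\omega_3-\omega_4=0$. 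Together these yield $\omega_1=\omega_3=\tfrac12(\omega_2+\omega_4)$, the second claimed identity.

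To pin down the thrust I would use altitude hold dynamically. Differentiating $\dot b_3=0$ and using $\dot\theta=\Omega_2$ (from the $\dot\theta$ equation with $\phi=0$, $\Omega_3=0$) gives $\dot v_3=\frac{1}{C_\theta}(\dot v_1 S_\theta+v_1 C_\theta\Omega_2+v_3 S_\theta\Omega_2)$. Eliminating $\dot v_3$ between this and the $\dot v_3$ equation of motion (with $v_2=0$, $\Omega_1=0$, $\phi=0$) and solving for the total thrust gives exactly $\sum_{i=1}^4 t_i=a$, with $a$ as stated. Writing $t_i=K_r\omega_i^2$ and inserting $\omega_1=\omega_3=\tfrac12(\omega_2+\omega_4)$ then reduces $\sum_i t_i=a$ to the quadratic $3\omega_2^2+2\omega_4\omega_2+3\omega_4^2-\frac{2a}{K_r}=0$, whose positive root is precisely $\omega_2=\tfrac13\big(\sqrt{(6a-8K_r\omega_4^2)/K_r}-\omega_4\big)$, with $\omega_4$ the free parameter.

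I expect the torque step to be the main obstacle: justifying cleanly that the thrust-differential and gyroscopic contributions to $\dot\Omega_1$ each vanish (rather than merely cancelling at a single instant), and checking that the residual yaw-reaction torque $K_d\sum_i(-1)^{i+1}\omega_i^2=-\tfrac{K_d}{2}(\omega_2-\omega_4)^2$ is consistent with the no-yaw idealization, i.e. that it is neglected as in the cited models. Once the constraints are fixed, the force balance and the final quadratic are routine algebra.
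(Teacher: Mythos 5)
Your proposal follows essentially the same route as the paper's proof: impose the same constraint set, use $\dot{\Omega}_1=0$ to get $\omega_1=\omega_3=\tfrac12(\omega_2+\omega_4)$, differentiate $\dot b_3=0$ with $\dot\theta=\Omega_2$ to equate the resulting $\dot v_3$ with its equation of motion and obtain $\sum_i t_i=a$, and solve the same quadratic $3\omega_2^2+2\omega_4\omega_2+3\omega_4^2-\tfrac{2a}{K_r}=0$. The only (welcome) refinements are that you derive $\omega_1=\omega_3$ from the vanishing of the $\Omega_2$-independent part of $\dot\Omega_1$ rather than assuming it outright, and you flag the residual yaw-reaction torque $-\tfrac{K_d}{2}(\omega_2-\omega_4)^2$, which the paper's proof silently ignores.
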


\begin{proof}
We desire to move only along the body $x$-axis with no motion in the body $y$ or $z$ direction, requiring the body to pitch without any yaw or roll.  This imposes $\omega_1=\omega_3$ as well as 
$$\dot{b}_3=\phi=V_2=\Omega_3=\Omega_1=0
$$
for the duration of the motion.

The equations of motion in Lemma \ref{lemma: eqns in coords} simplify considerably; in particular, we have $\dot{\theta}=\Omega_2$ and
$\dot{b}_3=-V_1S_\theta+V_3C_\theta.$
Using $\omega_1=\omega_3$ and setting $\dot{\Omega}_1=0$ forces $\omega_1=\omega_3=\frac{1}{2}(\omega_2+\omega_4)$.  Setting $\Ddot{b}_3=0$ and using $\dot{\theta}=\Omega_2$ gives
$$\dot{v}_3=\frac{1}{C_\theta}(\dot{v}_1S_\theta+v_1C_\theta\Omega_2+v_3S_\theta\Omega_2).
$$
Setting this equal to the equation of motion for $\dot{v}_3$, we find 
$$\sum_{i=1}^4 t_i = m\left(\frac{1}{C_\theta}(\dot{v}_1S_\theta+v_1C_\theta\Omega_2+v_3S_\theta\Omega_2)-v_1\Omega_2+gC_\theta\right)+v_3|v_3|C_{D_3}.
$$
Denote the right side of this equation by $a$.
Finally, using $t_i=K_r\omega_i^2$, we can solve for
$$
\omega_2=\frac{1}{3}\left(\sqrt{\frac{6a-8K_r\omega_4^2}{K_r}}-\omega_4\right).
$$
\end{proof}

Note that in Lemma \ref{lemma: x-axis motion} we could alternatively treat $\omega_4$ as the free parameter and solve for an identical expression for $\omega_4$ in terms of $\omega_2$.  We have similar choices for the yaw motion in Lemma \ref{lemma: yaw motion}.

Moreover, a nearly identical calculation gives path planning along the second body axis.  Treating $\omega_3$ as the free parameter, we can move only in the body $y$ direction using 
\begin{equation*}
    \omega_1=\frac{1}{3}\left(\sqrt{\frac{6a-8K_r\omega_3^2}{K_r}}-\omega_3\right) \qquad \text{and} \qquad \omega_2=\frac{1}{2}(\omega_1+\omega_3).
\end{equation*}
In order to move in another direction in the body $xy$-plane (with no motion in the $z$ direction), we can simply combine the motions in Lemmas \ref{lemma: yaw motion} and \ref{lemma: x-axis motion}.

\subsection{Simulations}\label{subsec: simulations}

\subsubsection{Single agent}

We demonstrate the simple motions from Section \ref{subsec: planning} in Figures \ref{fig: up and over config} and \ref{fig: up and over controls}.  In the coordinates of Section \ref{subsec: coord expr}, the drone begins at $(0,0,0,0,0,0)\in SE(3)$ with motors producing the necessary amount of thrust to remain at a hovering equilibrium.  Over the first 12 seconds, the drone maintains orientation while rising vertically in the body (and inertial) $z$-direction, ending at another hover.  Over the next four seconds, the drone maintains position while rotating $\pi/2$ radians counterclockwise about the body $z$-axis as described in Lemma \ref{lemma: yaw motion}.  Finally, for the last four seconds the drone executes the motion in Lemma \ref{lemma: x-axis motion}, flying directly along the body $x$-axis.  Figure \ref{fig: up and over config} shows the path of the drone in space as well as the orientation angles over time.  Figure \ref{fig: up and over controls} shows the controls used to produce this motion.

\begin{figure}
\centering
    \includegraphics[scale=0.08]{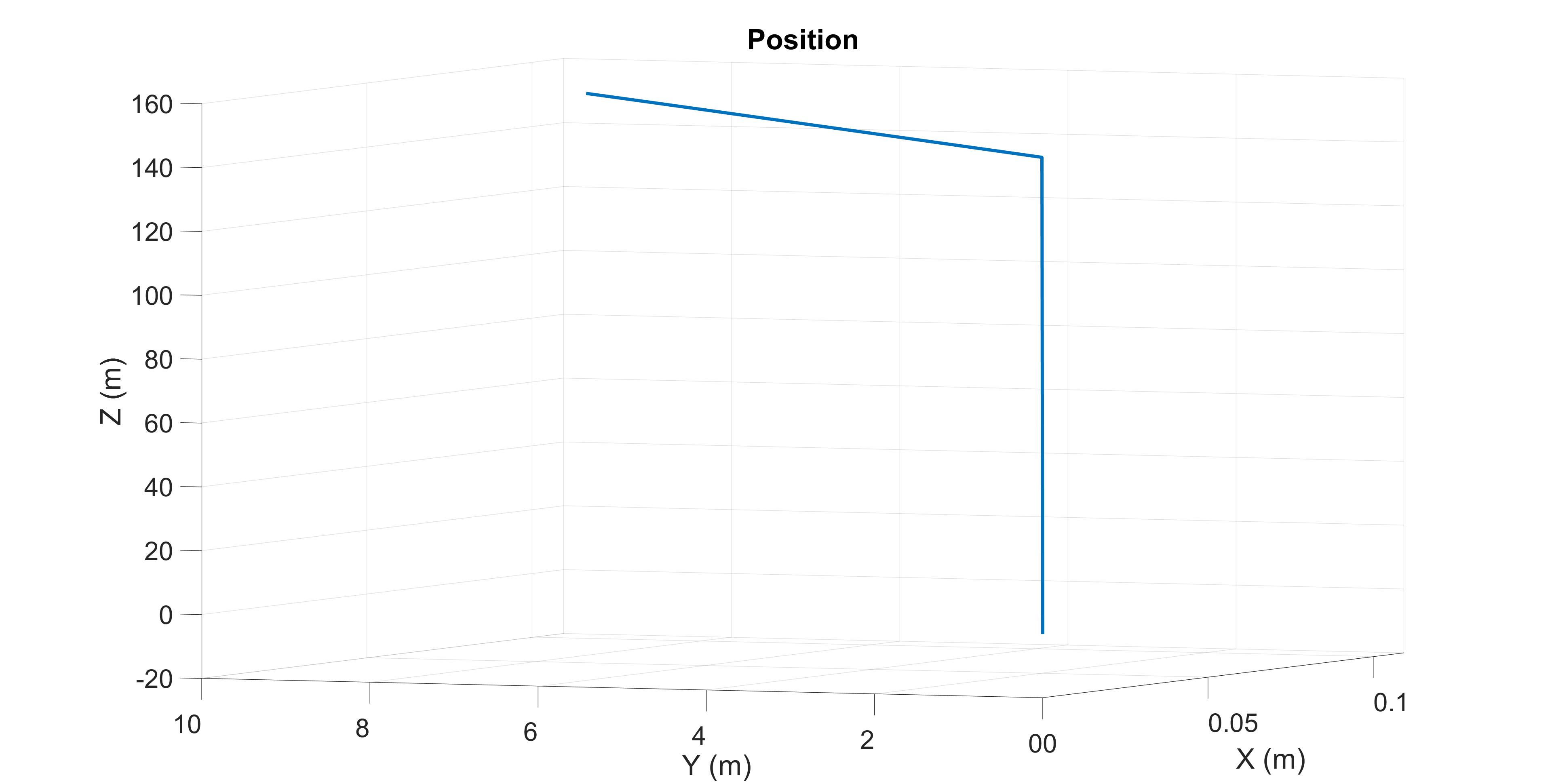}
    \includegraphics[scale=0.08]{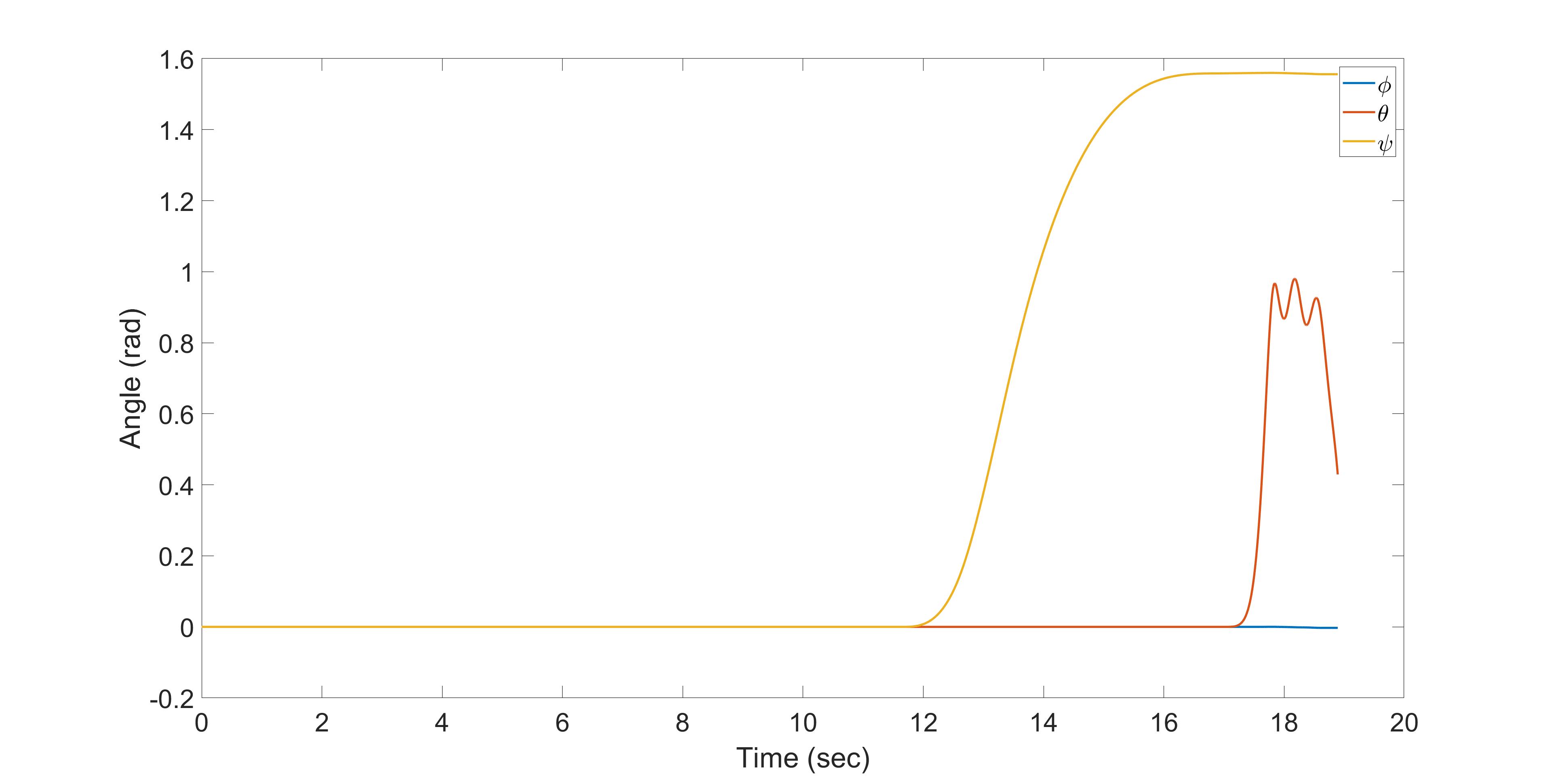}
    \caption{A quadcopter rising straight up, then yawing while hovering, then flying straight along the body $x$-axis.  Top: position in space.  Bottom: orientation angles over time.}\label{fig: up and over config}
\end{figure}

\begin{figure}
\centering
    \includegraphics[scale=0.08]{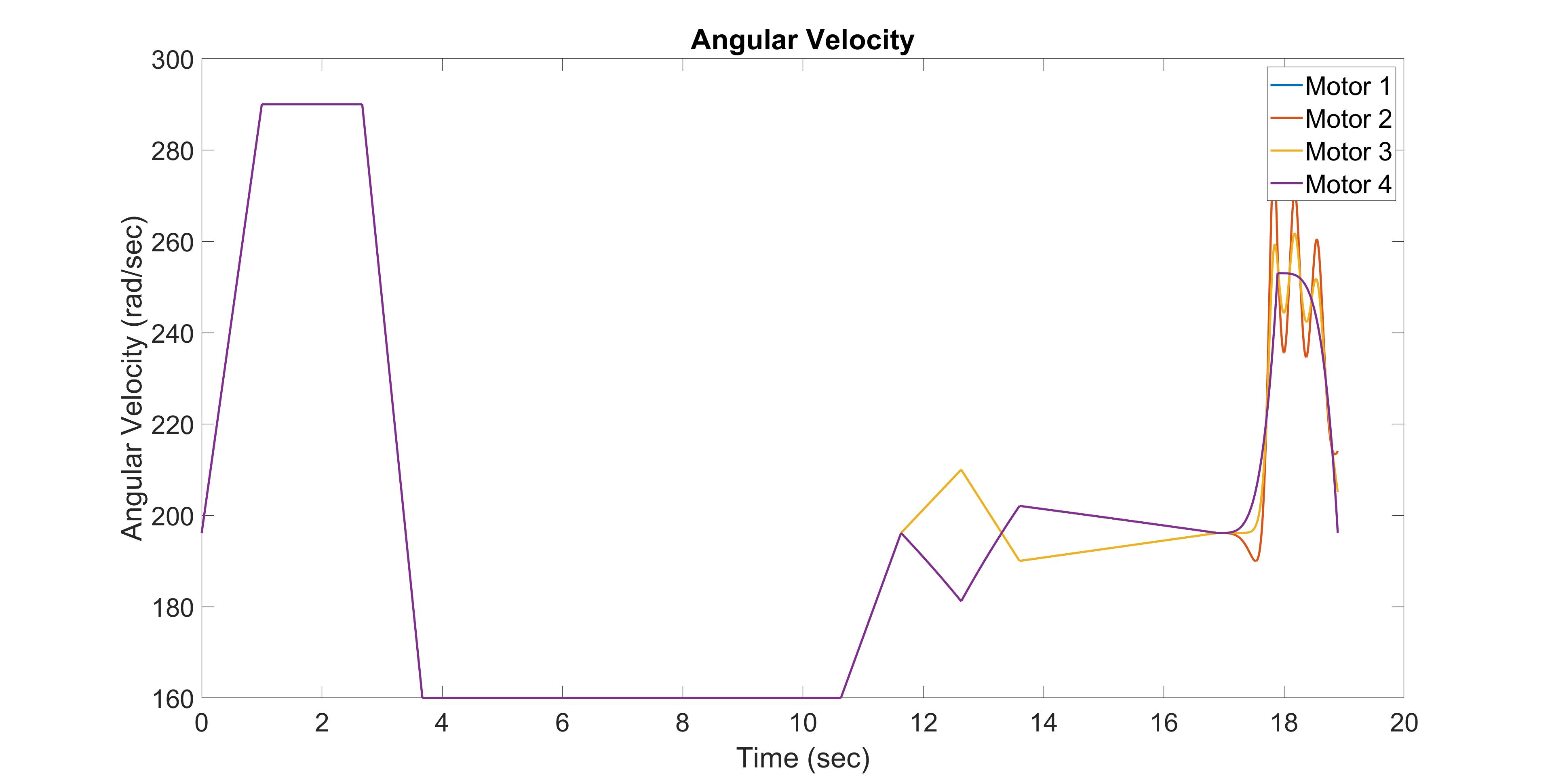}
    \includegraphics[scale=0.08]{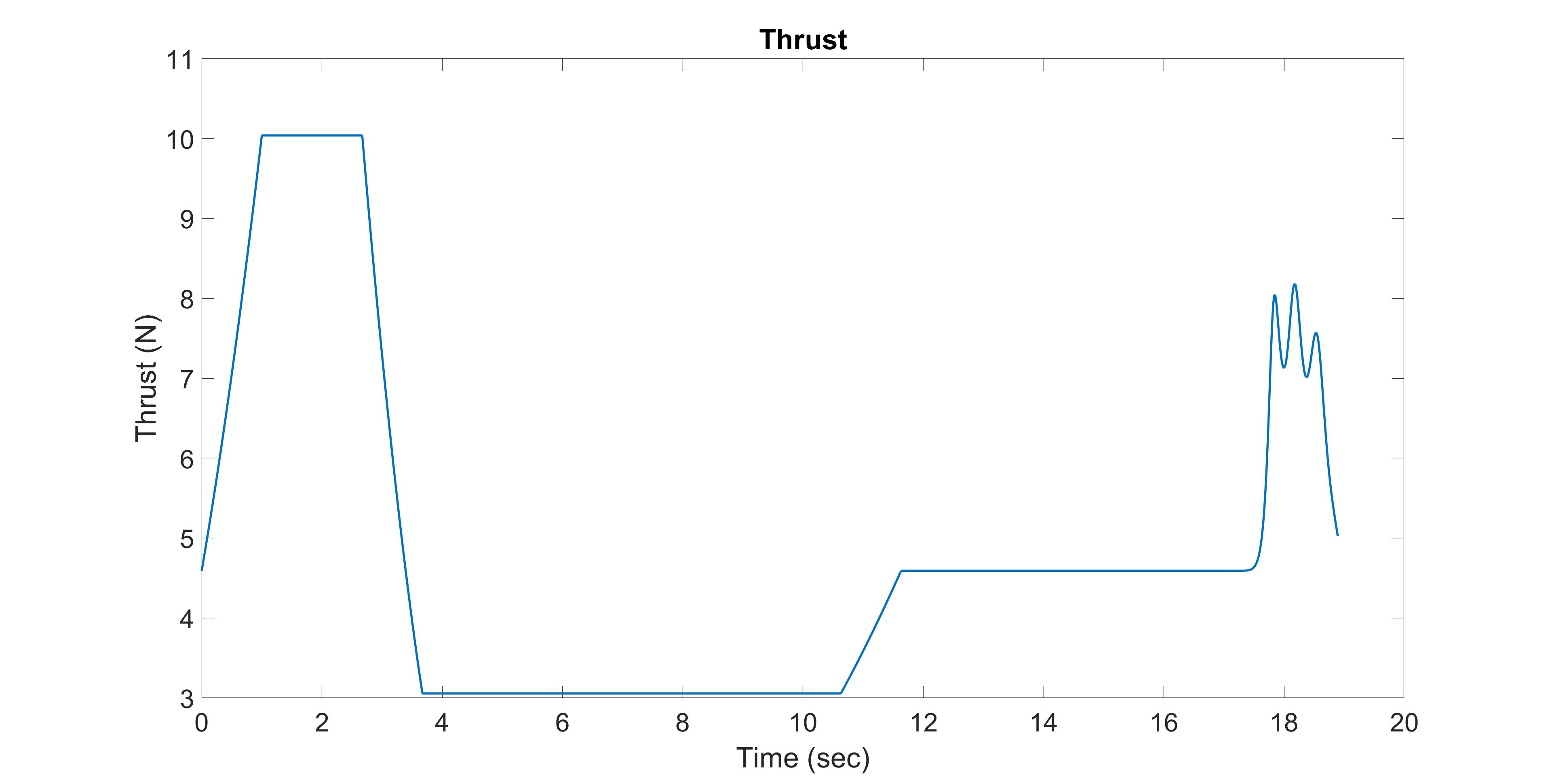}
    \caption{The controls used to produce the motion in Figure \ref{fig: up and over config}.  Top: angular velocities of each of the four motors over time.  Bottom: total thrust over time.}\label{fig: up and over controls}
\end{figure}

\subsubsection{Multi-agent}

Here we combine most of the work completed above in order to simulate an agreement protocol for dynamically sound quadcopters.  Consider three drones on a rendezvous mission with given initial conditions.  We assume each drone begins hovering with all translational and angular velocities zero.  This forces each initial pitch and roll to also be zero, but we choose different initial yaws.  For simplicity, we assume all three drones start in the $z=0$ plane (which need not correspond to the ground) but choose different initial $x,y$ positions.  This simplifying assumption does not sacrifice much generality: if the three drones started at three different nonzero altitudes, we would simply find the average initial altitude and execute simple motions bringing each drone up or down to this height.  

In the coordinates $(b_1, b_2, b_3, \phi, \theta, \psi) \in SE(3)$ from Section \ref{subsec: coord expr}, we choose our three drones to have initial configurations  
$(0,0,0, 0, 0, 0)$, $(0,9,0, 0, 0, -\pi/4)$, and $(15,9,0, 0, 0, \pi/2)$.

For this example we will assume a complete network $G=K_3$, so by Theorem \ref{thm: connected graph} we have that all trajectories are straight lines.  In particular, each drone will traverse the line segment connecting their initial position in $\mathbb R^3$ to the rendezvous position, which is simply the component-wise average of the initial conditions: $(5, 6, 0).$  Figure \ref{fig: rendezvous traj} shows these trajectories.

\begin{figure}
    \centering
    \includegraphics[scale=0.08]{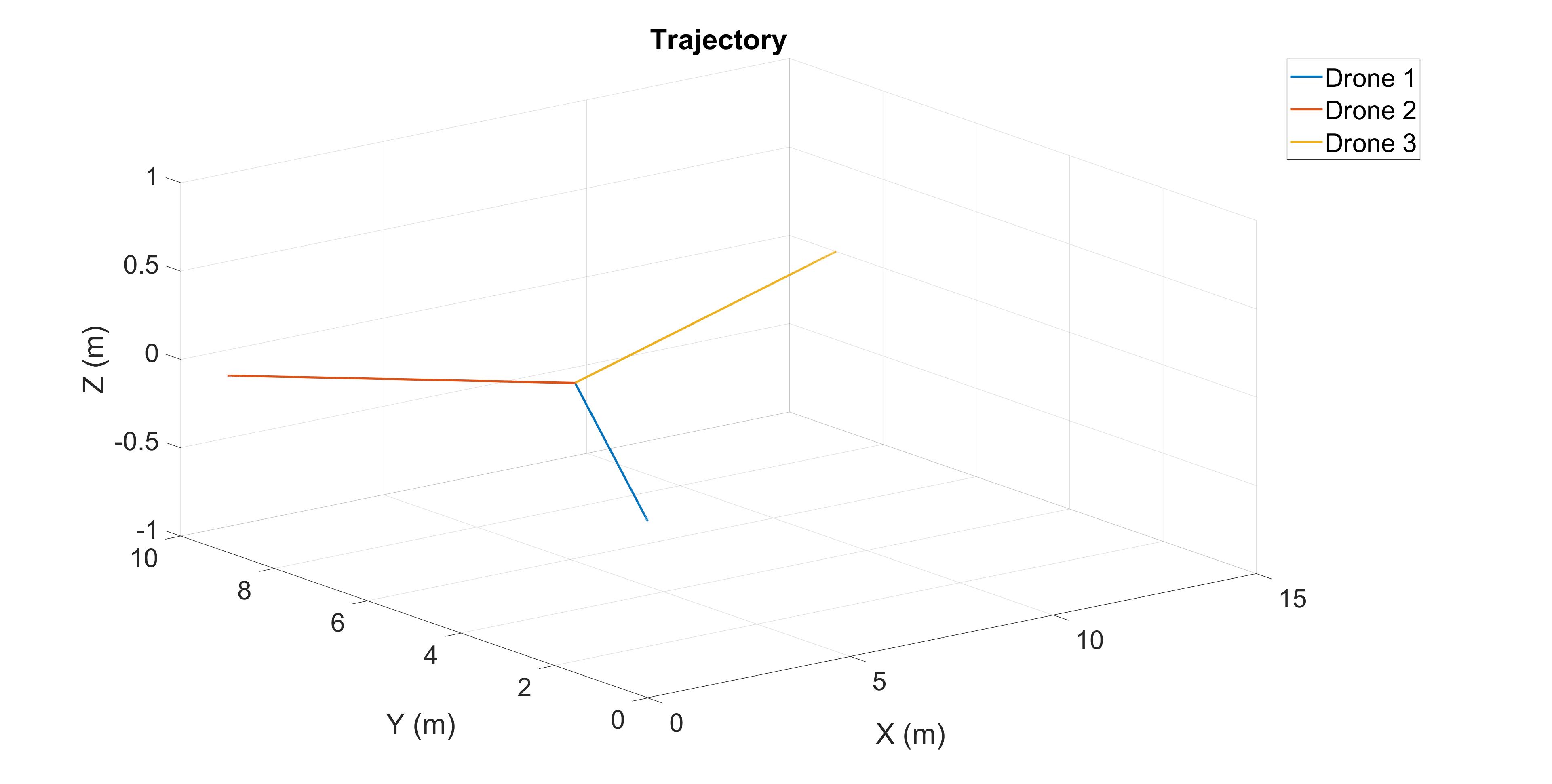}
    \caption{Three drones start at position $(0,0,0)$, $(0,9,0)$ and $(15,9,0)$ with initial yaw angles $0$, $-\pi/4$, and $\pi/2$.  Their trajectories to the rendezvous position are shown.}
    \label{fig: rendezvous traj}
\end{figure}

There are, however, two different methods for parametrizing these line segments and hence planning the drones' flights.  First, we could use the methods from Section \ref{section multi-agents}, obtaining trajectories which exponentially approach the rendezvous position.  In this setting, the drones' orientations are ignored completely, as are all dynamics, giving a simple yet unrealistic path.  

A main objective of this project was to investigate a second method: implement the path-planning from Section \ref{subsec: planning} to find controls bringing the drones along the desired line segments in a manner that respects the realistic dynamics from Section \ref{section-dynamics}.  This requires each drone beginning its flight by executing a pure yaw rotation to orient its body $x$-axis toward the rendezvous position, then fly straight in this direction.  See Figure \ref{fig: rendezvous yaw} to see the yaw for each drone over time; note that the total flight times differ for each drone.  Also note that for each drone, the roll over time is identically zero, but the pitch is non-zero in order to produce lateral displacement (compare with Figure \ref{fig: up and over config}).

\begin{figure}
    \centering
    \includegraphics[scale=0.07]{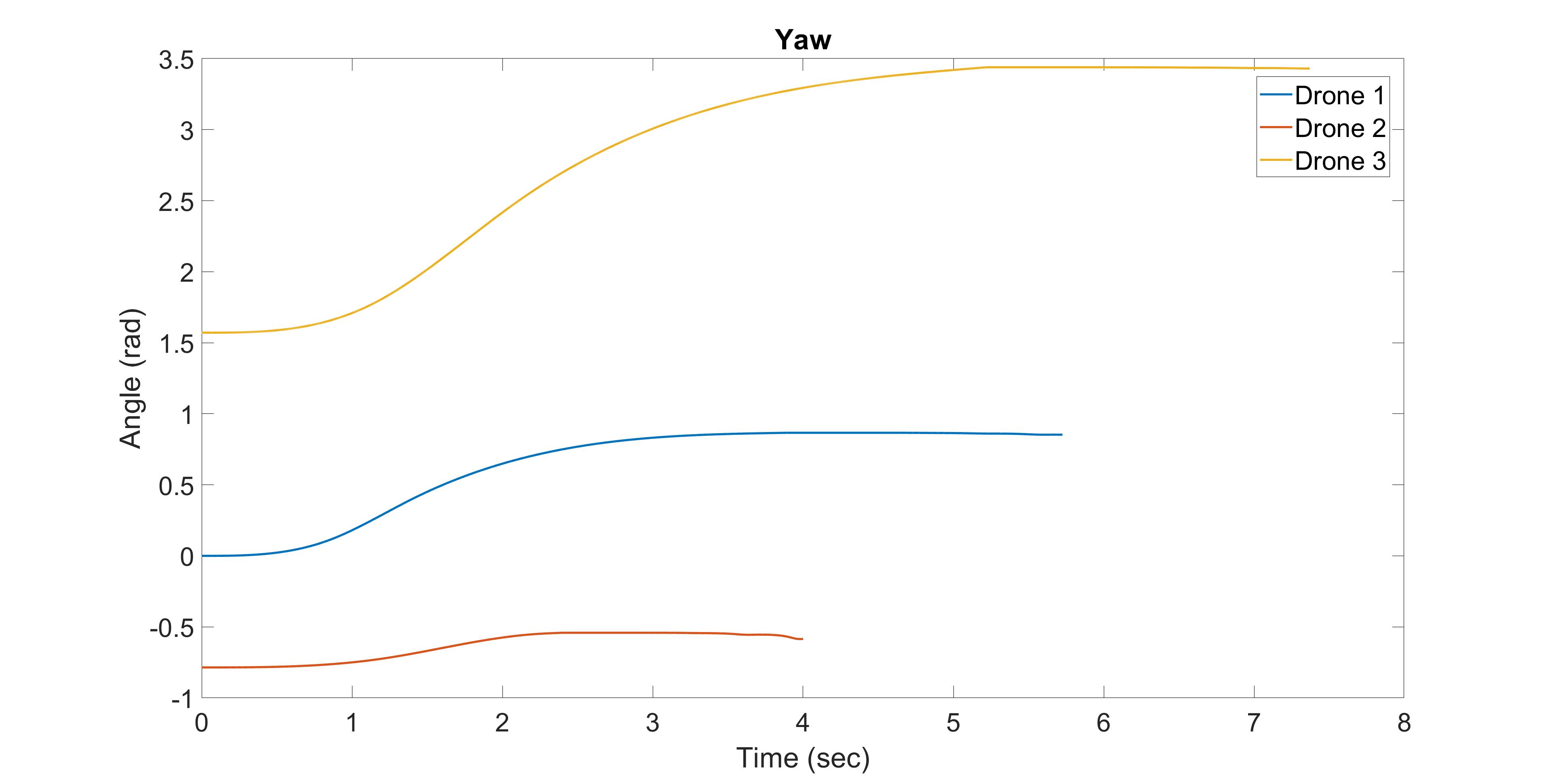}
    \caption{The yaw $\psi$ over time for each drone.  Total flight times differ for each drone.}
    \label{fig: rendezvous yaw}
\end{figure}

In order to create the desired trajectories in accordance with our equations of motion, we utilize Lemmas \ref{lemma: yaw motion} and \ref{lemma: x-axis motion}.  The resulting controls -- the four motor speeds for each of the three drones -- are shown in Figure \ref{fig: rendezvous controls}, as is the total thrust for each drone.  The parameters used in this simulation were taken from \cite{Bouadi2}, and appear in Table \ref{tab:params}.

\begin{table}
\caption{Parameters used in simulations}
\def\arraystretch{1.2}
  \begin{tabular}{ |  c | c |c |}
    \hline
    Constant & Symbol & Value \\ \hline 
    drone mass & $m$ & 0.468 \\ \hline 
    drone inertia  & $J$  & diag($(3.8278,3.8288,7.6566)\cdot10^{-3}$) \\ \hline 
   rotor inertia & $J_r$  & diag$(0, 0,2.8385\cdot 10^{-5})$  \\ \hline
       distance to rotor  & $d$  & 0.25  \\ \hline 
   thrust coefficient & $K_r$  & $2.9842\cdot 10^{-5}$ \\ \hline 
     translational drag & $C_D$ & $(5.5670,5.5670,6.3540)\cdot10^{-4}$  \\ \hline 
   rotational drag & $C_{\tau}$ & $(5.5670,5.5670,6.3540)\cdot 10^{-4}$  \\ \hline 
   propeller drag & $K_d$ & $3.2320\cdot 10^{-7}$ \\ \hline 
  \end{tabular}
  \label{tab:params}
\end{table}

 \begin{figure}
    \centering
    \includegraphics[scale=0.07]{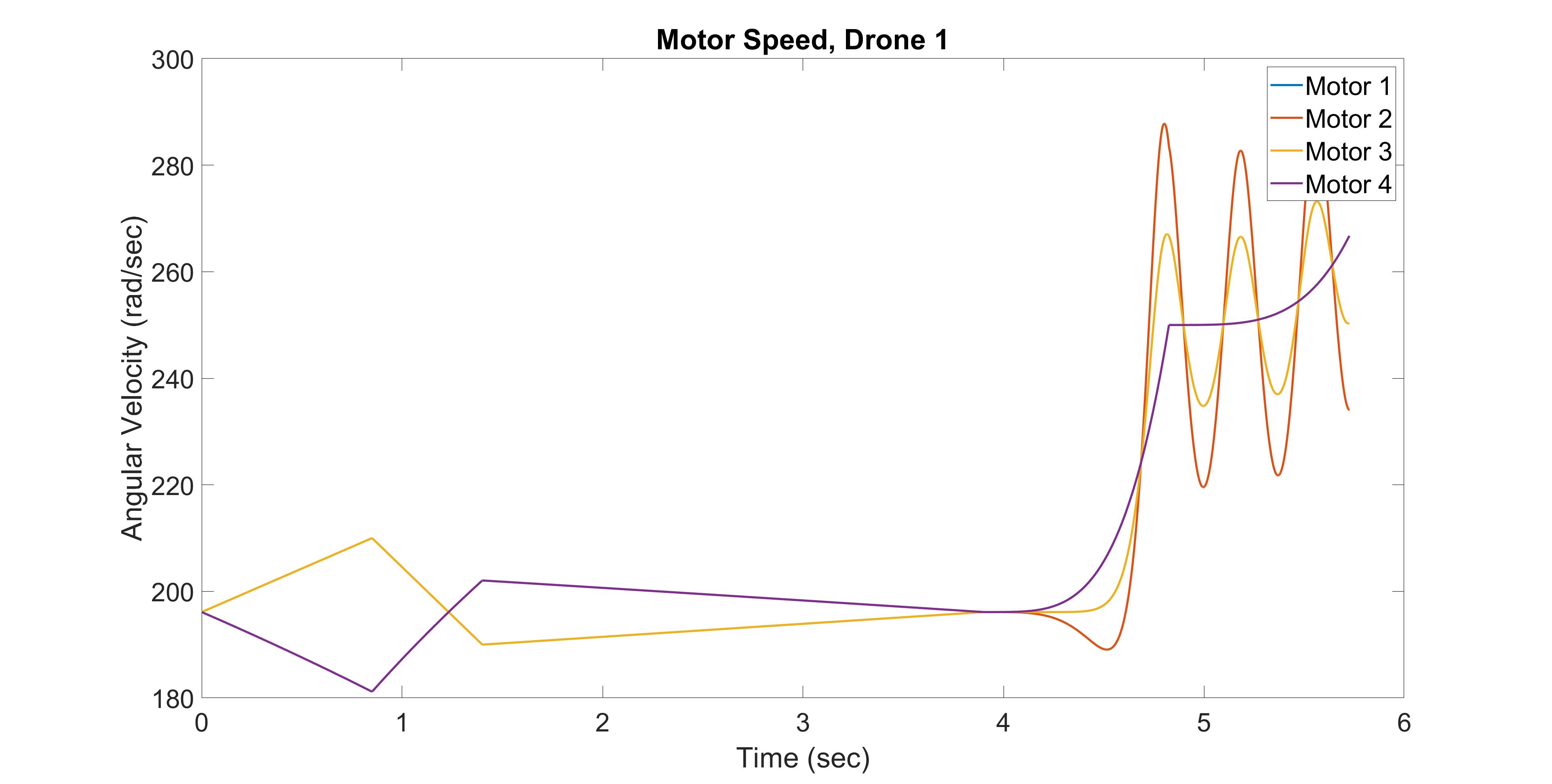}
    \includegraphics[scale=0.07]{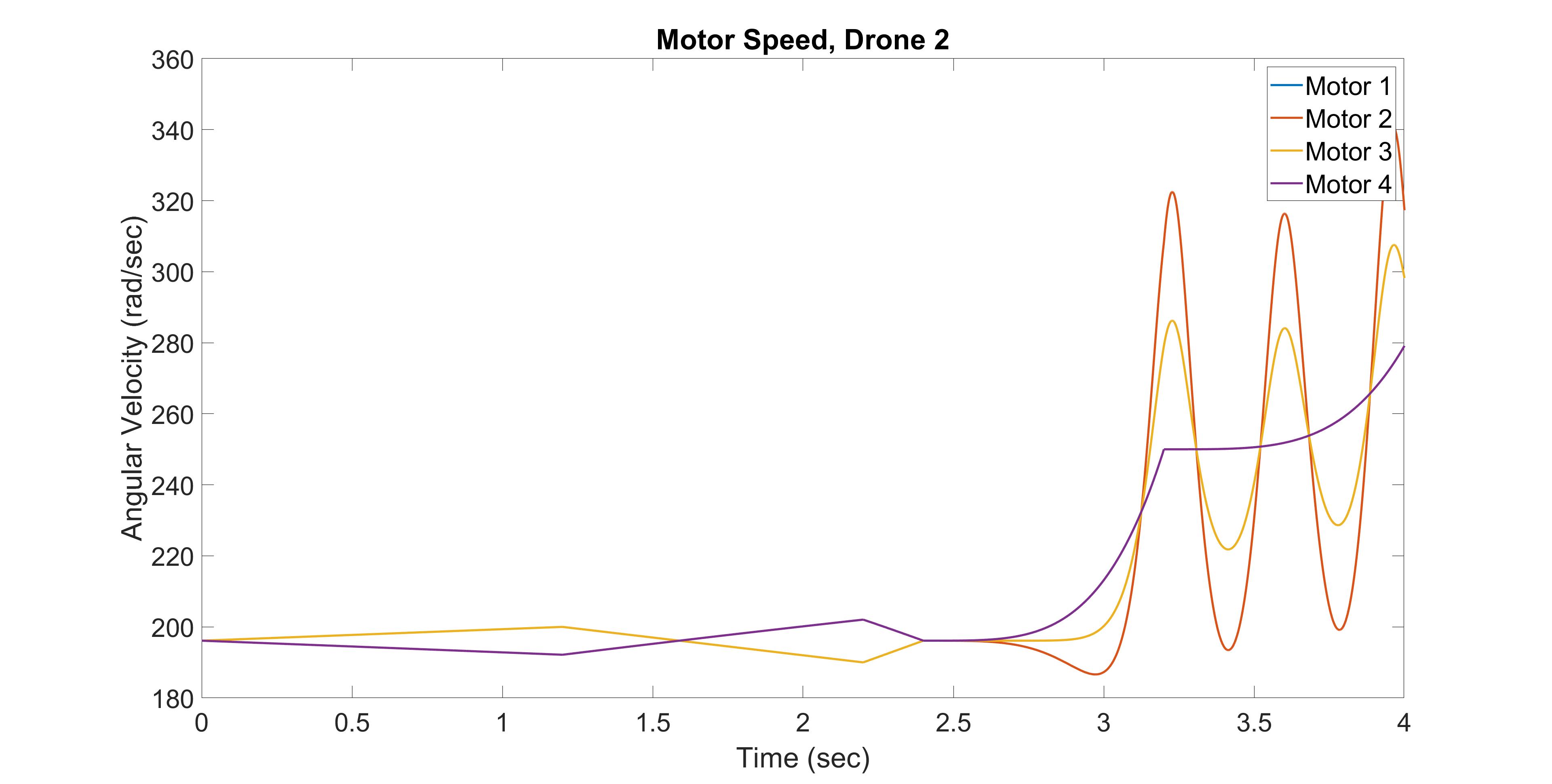}
    \includegraphics[scale=0.07]{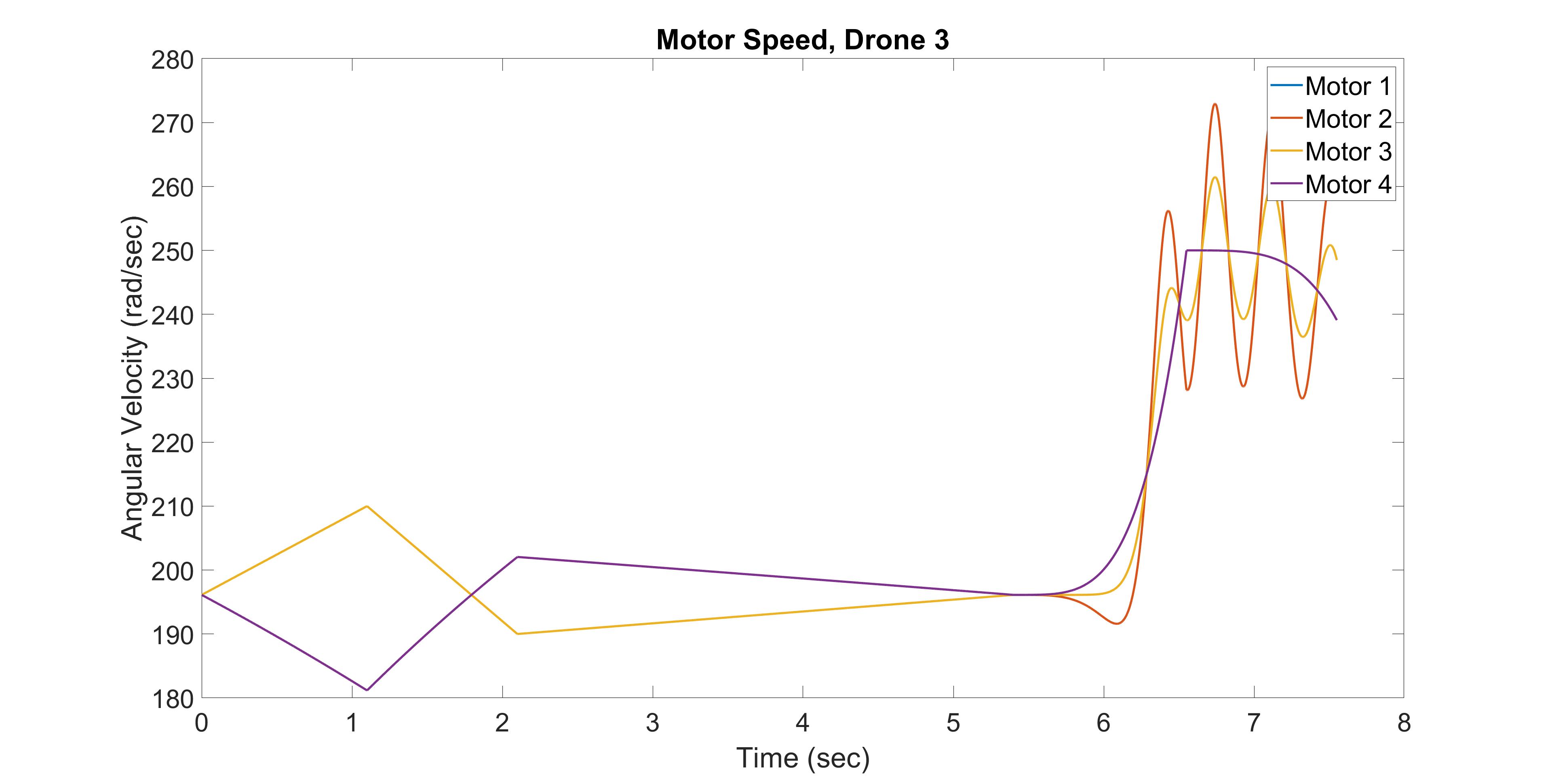}
    \includegraphics[scale=0.07]{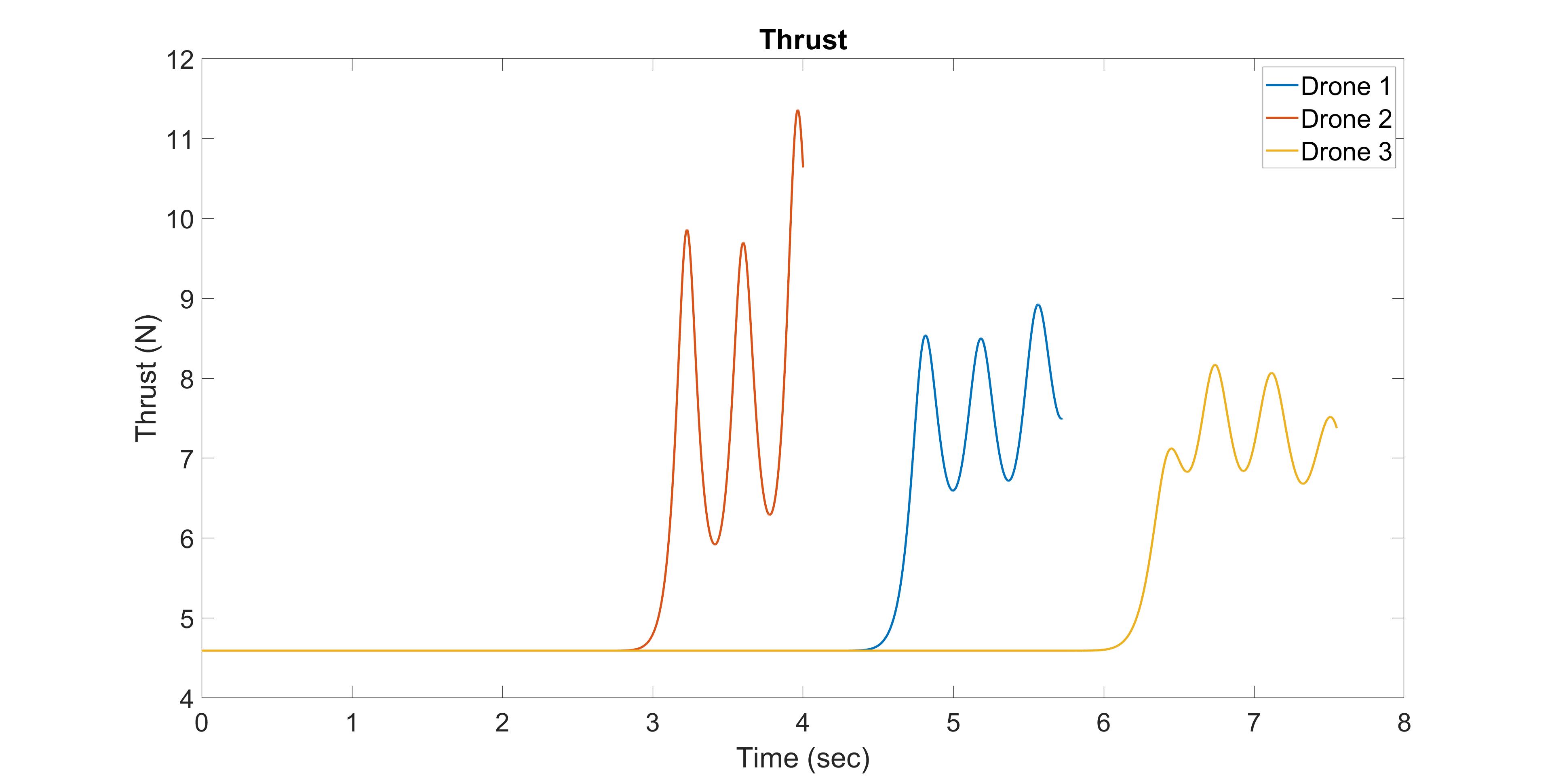}
    \caption{The controls used to produce the motions in Figures \ref{fig: rendezvous traj} and \ref{fig: rendezvous yaw}.  We show the four motor speeds and total thrust for each drone as functions of time.}
    \label{fig: rendezvous controls}
\end{figure}
 
 Finally, in Figure \ref{fig: rendezvous compare} we compare the paths generated by the two methods above. 
 The solid curves represent the simple but unrealistic paths derived in Section \ref{section multi-agents}.
 The dashed curves represent the dynamically sound paths in Figures \ref{fig: rendezvous yaw} and \ref{fig: rendezvous controls}.  
 Both methods trace the same curves shown in Figure \ref{fig: rendezvous traj}, but differ in their parametrizations.
 The top plot shows the $x$-coordinate over time; the solid curves for drones 1 and 2 coincide as both start with an initial $x$-value of 0.
 The bottom plot shows the $y$-coordinate over time; the solid curves for drones 2 and 3 coincide as both start with an initial $y$-value of 9.
 Note that the solid curves approach (but do not reach) the rendezvous position much faster than the dashed curves as they are unconstrained by orientation or realistic acceleration.
To move from the simulation environment to a more realistic experiment with actual drones would require a more careful consideration of the observer design as in \cite{DistribControl}.

\begin{figure}
\includegraphics[width=.49\linewidth]{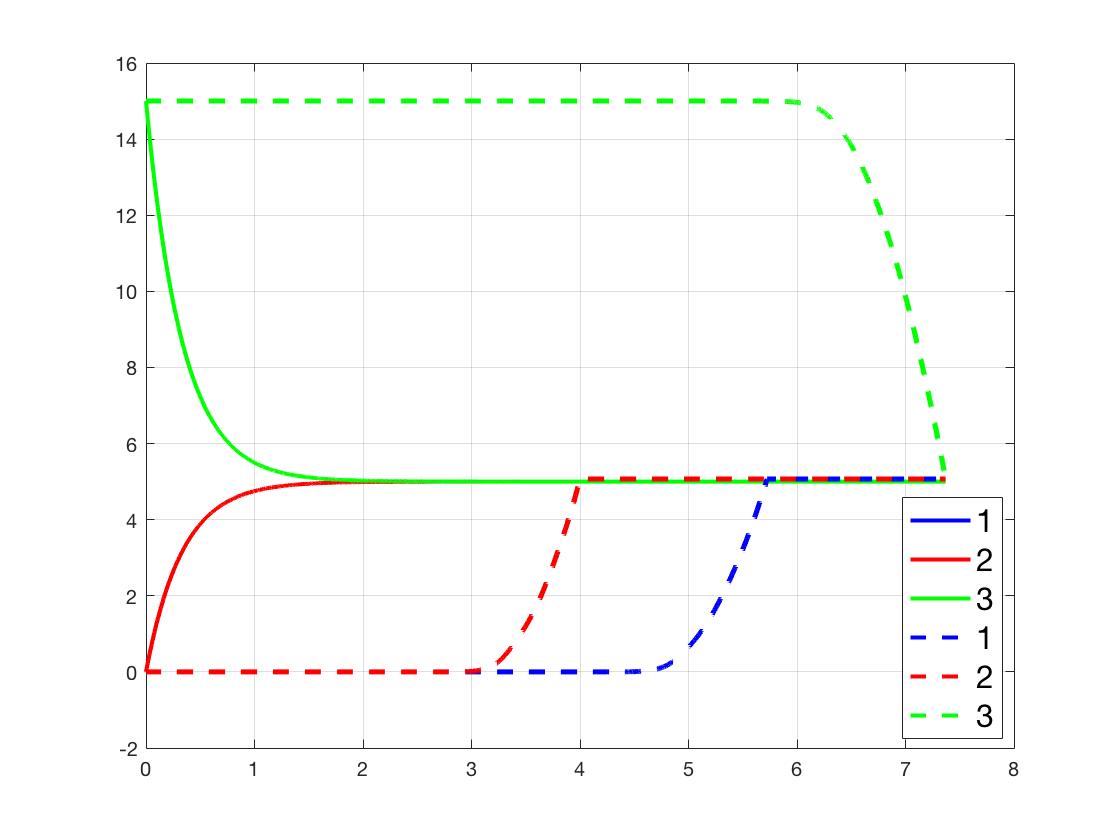}
\includegraphics[width=.49\linewidth]{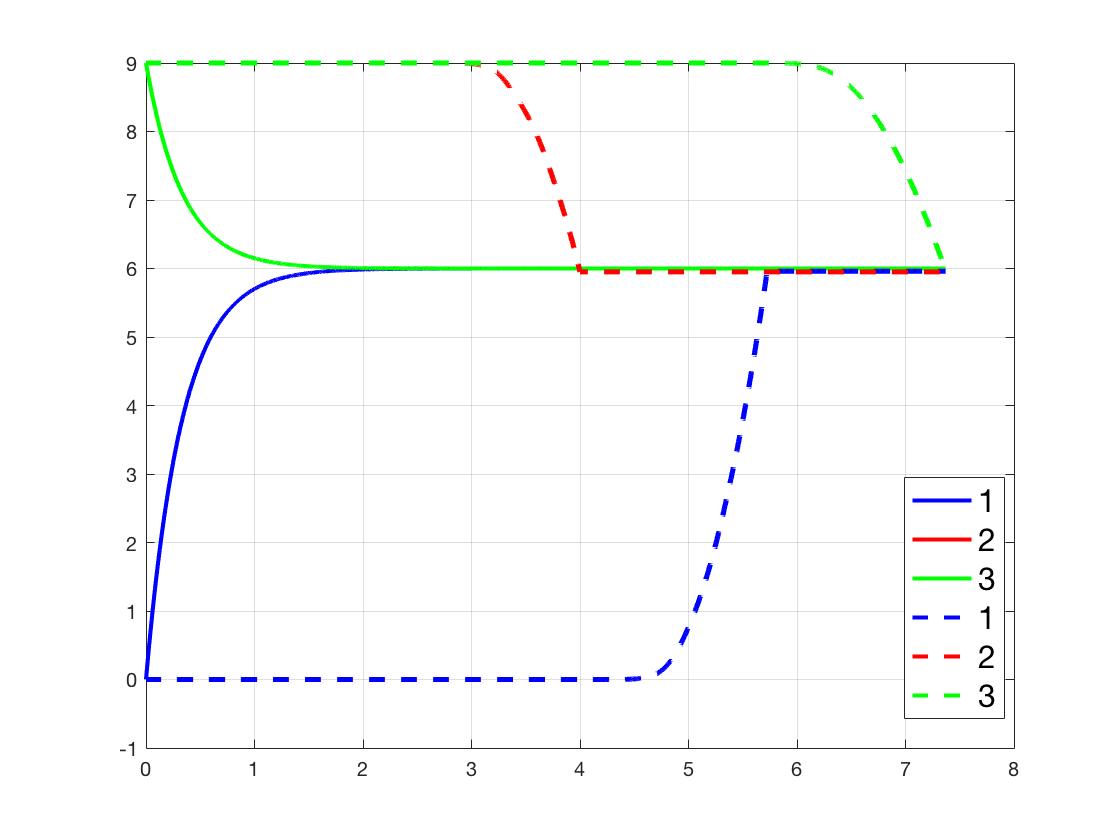}
\caption{Comparison of two methods of motion planning.  Dashed curves include dynamics; solid curves do not.  Left: $x$-coordinate over time.  Right: $y$-coordinate over time.}
\label{fig: rendezvous compare}
\end{figure} 

\section{Conclusion}
The primary goals of the paper were twofold. First, we introduce the agreement protocol from a graph theoretic viewpoint and apply it to a rendezvous mission for a set of particle agents without orientation. Second, we derive the equations of motion for a quadcopter to take into account its orientation and dynamics. We then apply the quadcopter dynamics to a rendezvous mission and compare the trajectories with the ones from the network multi-agents algorithm. This is a first step towards developing more complex multi-agent systems for quadcopters. A next step will be to analyze the decoupling vector fields for the quadcopter from its affine connection control system formulation and use these trajectories to approximate geometric trajectories obtained from the multi-agents network approach. We focused here on the agreement protocol, but ongoing work shows that it can be extended to formation flying for multi-agents and more complex missions, such as search and rescue with a large team of quadcopters including leaders and followers. We are also planning to run real drone flights to demonstrate the applicability of our techniques. 



\medskip
Received xxxx 20xx; revised xxxx 20xx.
\medskip

\end{document}